\documentclass[10pt,reqno]{amsart}
\usepackage{amsmath,amsthm,amssymb,latexsym,esint,cite,mathrsfs}
\usepackage{verbatim,wasysym,cite,relsize,color}
\usepackage[latin1]{inputenc}
\usepackage{microtype}
\usepackage{color,enumitem,graphicx}
\usepackage[colorlinks=true,urlcolor=blue, citecolor=red,linkcolor=blue,
linktocpage,pdfpagelabels, bookmarksnumbered,bookmarksopen]{hyperref}
\usepackage[hyperpageref]{backref}
\usepackage[english]{babel}
\usepackage{tikz}
\usepackage[font=small,skip=5pt]{caption}
\setlength{\belowcaptionskip}{-10pt}

\usepackage{geometry}
\geometry{
	a4paper,
	total={160mm,247mm},
	left=25mm, right=25mm,
	top=25mm, bottom=25mm,
	heightrounded,
}

\numberwithin{equation}{section}
\theoremstyle{plain}
\newtheorem{theorem}{Theorem}[section]
\newtheorem{lemma}[theorem]{Lemma}

\newtheorem{corollary}[theorem]{Corollary}
\theoremstyle{Remark} 
\newtheorem{remark}{Remark}[section]

\newcommand{\R}{\mathbb R}
\newcommand{\C}{\mathbb C}

\newcommand{\vareps}{\varepsilon}

\DeclareMathOperator*{\loc}{loc}

\DeclareMathOperator*{\thes}{th}

\DeclareMathOperator*{\ima}{Im}
\DeclareMathOperator*{\rea}{Re}

\title[Blow-up for linearly damped NLS]
{Blow-up criteria for linearly damped nonlinear Schr\"odinger equations}
\author[V. D. Dinh]{Van Duong Dinh}
\address[V. D. Dinh]{Laboratoire Paul Painlev\'e UMR 8524, Universit\'e de Lille CNRS, 59655 Villeneuve d'Ascq Cedex, France
and 
Department of Mathematics, HCMC University of Pedagogy, 280 An Duong Vuong, Ho Chi Minh, Vietnam}
\email{contact@duongdinh.com}

\subjclass[2010]{35B44; 35Q55}
\keywords{Damped nonlinear Schr\"odinger equation, Scattering, Blow-up}

\begin{document}
	
	\begin{abstract}
	We consider the Cauchy problem for linearly damped nonlinear Schr\"odinger equations
	\[
	i\partial_t u + \Delta u + i a u= \pm |u|^\alpha u, \quad (t,x) \in [0,\infty) \times \R^N,
	\]
	where $a>0$ and $\alpha>0$. We prove the global existence and scattering for a sufficiently large damping parameter in the energy-critical case. We also prove the existence of finite time blow-up $H^1$ solutions to the focusing problem in the mass-critical and mass-supercritical cases. 
	\end{abstract}

	\maketitle

	\section{Introduction}
	\label{S1}
	\setcounter{equation}{0}
	We consider the Cauchy problem for linearly damped nonlinear Schr\"odinger equations
	\begin{equation} \label{DNLS}
		\left\{ 
		\begin{array}{rcl}
			i\partial_t u + \Delta u + i a u &=& \mu |u|^\alpha u, \quad (t,x) \in [0,\infty) \times \R^N, \\
			u(0,x)&=& u_0(x),
		\end{array}
		\right.
	\end{equation}
	where $u: [0,\infty) \times \mathbb{R}^N \rightarrow \mathbb{C}$, $u_0: \mathbb{R}^N \rightarrow \mathbb{C}$, $N\geq 1, a>0$, $\mu \in \{\pm 1\}$ and $\alpha>0$. The case $\mu=1$ (resp. $\mu=-1$) corresponds to the defocusing (resp. focusing) case. The linearly damped nonlinear Schr\"odinger equation appears in various areas of nonlinear optics, plasma physics and fluid mechanics. It has been studied by many mathematicians and physicists (see e.g. \cite{ADKM, CZW, Darwich, GRH, Fibich, Fibich-book, Inui, OT,  PPV, RBC, Tsutsumi-M, Tsutsumi-M-90}). 
	
	The equation \eqref{DNLS} is locally well-posed in $H^1$ (see e.g \cite{Cazenave}). More precisely, for any $u_0 \in H^1$, there exist $T^* \in (0,\infty]$ and a unique solution $u$ to \eqref{DNLS} such that $u\in C([0,T^*), H^1)$. Moreover, if $T^*<\infty$, then $\lim_{t \rightarrow T^*} \|u(t)\|_{H^1}=\infty$. Moreover, local solutions satisfy
	\begin{align} \label{mass-u}
	\|u(t)\|_{L^2} = e^{-at} \|u_0\|_{L^2}
	\end{align} 
	and
	\begin{align} \label{energy-u}
	\frac{d}{dt} E(u(t)) = - a K(u(t))
	\end{align}
	for any $t\in [0,T^*)$, where
	\begin{align} \label{defi-K}
	\begin{aligned}
	E(u(t)) &:=\frac{1}{2}\|\nabla u(t)\|_{L^2} + \frac{\mu}{\alpha+2} \|u(t)\|^{\alpha+2}_{L^{\alpha+2}}, \\
	K(u(t)) &:= \|\nabla u(t)\|^2_{L^2} + \mu \|u(t)\|^{\alpha+2}_{L^{\alpha+2}}.
	\end{aligned}
	\end{align}
	
	\subsection{Known results}
	Let us recall some known results related to \eqref{DNLS}. 
	
	{\bf $\blacktriangleright$ The defocusing case $\mu=1$.} Using \eqref{mass-u}, \eqref{energy-u} and the blow-up alternative, it is easy to see that local solutions can be extended globally in time, i.e. $T^*=\infty$ if $0<\alpha<\alpha^*$, where
	\begin{align} \label{defi-alph-star}
	\alpha^*:=\left\{
	\begin{array}{cl}
	\frac{4}{N-2} &\text{if } N\geq 3, \\
	\infty &\text{if } N=1,2.
	\end{array}
	\right.
	\end{align}
	
	Recently,  Inui \cite{Inui} proved that all global solutions to the defocusing \eqref{DNLS} with $0<\alpha<\alpha^*$ scatter exponentially in the sense that there exists $u_+\in H^1$ such that 
	\begin{align} \label{defi-expo-scat}
	\lim_{t\rightarrow \infty} e^{at} \|u(t) - e^{-at} e^{it\Delta} u_+\|_{H^1}=0.
	\end{align}
	
	{\bf $\blacktriangleright$ The focusing case $\mu=-1$,}	$\bullet$ In the case $0<\alpha<\frac{4}{N}$, using \eqref{mass-u} and the following blow-up alternative (see \cite{Tsutsumi-Y} or \cite{Cazenave}): if $T^*<\infty$, then $\lim_{t\rightarrow T^*} \|u(t)\|_{L^2}=\infty$, we see that local solutions can be extended globally in time. Moreover, Inui \cite{Inui} proved that these global solutions scatter exponentially in the sense of \eqref{defi-expo-scat}. This fact shows an interesting effect of the linear damping to the nonlinear Schr\"odinger equation. Note that global solutions to the undamped NLS (i.e. $a=0$) does not scatter to the free solution when $\alpha \leq \frac{2}{N}$. 
	
	$\bullet$ In the case $\alpha=\frac{4}{N}$, Ohta-Todorova \cite{OT} proved that for any $u_0 \in H^1$, there exists $a^*=a^*(\|u_0\|_{H^1})>0$ such that for all $a>a^*$, the corresponding solution to \eqref{DNLS} exists globally in time.
		
	Darwich \cite{Darwich} proved that if $u_0 \in H^1$ satisfies $\|u_0\|_{L^2}< \|Q\|_{L^2}$, where $Q$ is the unique positive radial solution to 
	\begin{align} \label{ell-equ-mass}
	-\Delta Q + Q - |Q|^{\frac{4}{N}} Q=0,
	\end{align}
	then for any $a>0$, the corresponding solution to \eqref{DNLS} exists globally in time. 
	
	Inui \cite{Inui} proved that the global solution obtained by Darwich \cite{Darwich} actually scatters exponentially in the sense of \eqref{defi-expo-scat}. Moreover, he also proved that all global solutions to the focusing problem \eqref{DNLS} with $\alpha=\frac{4}{N}$ scatter exponentially in the sense of \eqref{defi-expo-scat}. 
	
	Fibich \cite{Fibich} provided some numerical simulations which suggest the existence of finite time blow-up solutions to \eqref{DNLS}. Recently, Darwich \cite{Darwich} proved in dimensions $N\leq 4$ the existence of log-log speed blow-up solutions to \eqref{DNLS} with $\|u_0\|_{L^2} = \|Q\|_{L^2} + \delta$ for some $\delta>0$ sufficiently small. The proof of this result is based on the geometric decomposition technique using a blow-up result of Merle-Raphael \cite{MR}. Note that a general criterion for the existence of finite time blow-up solutions to the focusing problem \eqref{DNLS} in the mass-critical case remains an open problem.  
	
	One can infer from the results of Ohta-Todorova \cite{OT}, Inui \cite{Inui} and Darwich \cite{Darwich} that there are only two type of solutions to the focusing problem \eqref{DNLS} in the mass-critical case: finite time blow-up solutions and global scattering solutions. There is no global solution which does not scatter. This is another difference from the undamped NLS where there exist global non scattering solutions called standing waves solutions. Note that the non existence of standing waves solutions of the form $u(t,x) = e^{i\omega t} \phi(x)$ with $\omega \in \R$ can be easily seen from \eqref{mass-u} and the fact $\|u(t)\|_{L^2}=\|\phi\|_{L^2}$. 
	
	This is observed through numerical simulations (see \cite{Fibich}) that for a given initial data condition that leads to blow-up in the undamped NLS, there is a threshold value $a_{\thes}$, which depends on the initial data condition, such that collapse is arrested when $a>a_{\thes}$ and a singularity forms when $a<a_{\thes}$. However, a rigorous proof for this result still remains open. 
	
	$\bullet$ In the case $\frac{4}{N}<\alpha<\alpha^*$, Tsutsumi \cite{Tsutsumi-M} proved that if $u_0 \in \Sigma:= H^1 \cap L^2(|x|^2 dx)$ satisfies
	\[
	E(u_0) \leq 0, \quad \frac{a\alpha}{N\alpha-4} I(u_0) + V(u_0)<0,
	\]
	where
	\begin{align} \label{defi-I-V}
	I(u_0):= \|xu_0\|^2_{L^2}, \quad V(u_0):= \int x \cdot \ima \left(\nabla u_0 \overline{u}_0\right) dx,
	\end{align}
	then there exists $a_* = a_*(\|u_0\|_{H^1})>0$ such that for all $0<a<a_*$ the corresponding solution to \eqref{DNLS} blows up in finite time. 
	
	Ohta-Todorova \cite{OT} improved Tsutsumi's results and showed that if $u_0 \in \Sigma$ satisfies one of the following conditions:
	\begin{itemize}
		\item $E(u_0)<0$,
		\item $E(u_0)=0$ and $V(u_0)<0$,
		\item $E(u_0)>0$ and $V(u_0) + \sqrt{2E(u_0) I(u_0)} <0$,
	\end{itemize}
	then there exists $a_*=a_*(\|u_0\|_{H^1})>0$ such that for all $0<a<a_*$, the corresponding solution blows up in finite time. They also proved that for any $u_0 \in H^1$, there exists $a^*=a^*(\|u_0\|_{H^1})>0$ such that for all $a>a^*$, the corresponding solution to \eqref{DNLS} exists globally in time. Moreover, they constructed invariant sets under the flow of \eqref{DNLS} which does not depend on the damping parameter $a$ and showed the global existence for initial data in these sets. More precisely, they defined
	\[
	\mathcal{A}_\omega:= \left\{ \phi \in H^1 \backslash \{0\} \ : \ S_\omega(\phi) <d(\omega), K_\omega(\phi)>0\right\},
	\]
	where
	\begin{align*}
	S_\omega(\phi) &:=\frac{1}{2}\|\nabla \phi\|^2_{L^2} +\frac{\omega}{2} \|\phi\|^2_{L^2} -\frac{1}{\alpha+2}\|\phi\|^{\alpha+2}_{L^{\alpha+2}}, \\
	K_\omega(\phi) &:= \|\nabla \phi\|^2_{L^2} +\omega \|\phi\|^2_{L^2} - \|\phi\|^{\alpha+2}_{L^{\alpha+2}},
	\end{align*}
	and
	\begin{align*}
	d(\omega) := \inf \left\{ S_\omega(\phi) \ : \ \phi \in H^1 \backslash \{0\}, K_\omega(\phi)=0 \right\}.
	\end{align*}
	It was shown in \cite{OT} that if $u_0 \in \bigcup_{\omega>0} \mathcal{A}_\omega$, then for all $a>0$, the corresponding solution to \eqref{DNLS} exists globally in time. A similar result with $\omega=1$ was shown by Chen-Zhang-Wei in \cite{CZW}. 
	
	Inui \cite{Inui} also proved that if $u$ is a global $H^1$ solution to \eqref{DNLS} satisfying 
	\begin{align*}
	\left\{
	\renewcommand*{\arraystretch}{1.3}
	\begin{array}{ll}
	\lim_{t\rightarrow \infty} e^{-\frac{4-(N-2)\alpha}{N\alpha-4} a t} \|\nabla u(t)\|_{L^2} =0 &\text{if } \frac{4}{N-1}\leq \alpha <\alpha^*, \\
	\lim_{t\rightarrow \infty} e^{-at} \|\nabla u(t)\|_{L^2} =0 &\text{if } \frac{4}{N}<\alpha <\frac{4}{N-1},
	\end{array}
	\right.
	\end{align*}
	then $u$ scatters exponentially in the sense of \eqref{defi-expo-scat}. This implies in particular that if $u$ is a global $H^1$ solution to \eqref{DNLS} satisfying $\sup_{t\in [0,\infty)} \|u(t)\|_{H^1} <\infty$, then $u$ scatters exponentially. Using this fact and a result of \cite{OT}, he inferred that for $\frac{4}{N}<\alpha<\alpha^*$, if $u_0 \in \bigcup_{\omega>0} \mathcal{A}_\omega$, then the corresponding solution to \eqref{DNLS} scatters exponentially.
	
	\subsection{Main results}
	Complementing to aforementioned results, the main purpose of this paper is twofold. 
	\begin{itemize}
		\item We show the global existence and scattering for a sufficiently large damping parameter in the energy-critical case. 
		\item We prove the existence of finite time blow-up solutions for the focusing problem \eqref{DNLS} in the mass-critical and mass-supercritical cases.
	\end{itemize}

	Our first result concerns the global existence and scattering for \eqref{NLS} in the energy-critical case $\alpha=\frac{4}{N-2}$. 

	\begin{theorem} \label{theo-scat-ener}
		Let $N\geq 3$, $\alpha=\frac{4}{N-2}$ and $\mu\in \{\pm 1\}$. Let $u_0 \in H^1$. Then there exists $a^*=a^*(u_0)>0$ such that for all $a>a^*$, the corresponding solution to \eqref{DNLS} exists globally in time and scatters exponentially in the sense of \eqref{defi-expo-scat}.
	\end{theorem}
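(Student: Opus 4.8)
The plan is to strip off the damping by the substitution $v(t):=e^{at}u(t)$, which transforms \eqref{DNLS} into the energy-critical Schr\"odinger equation with an exponentially decaying nonlinear coefficient,
\[
i\partial_t v + \Delta v = \mu\, e^{-\alpha a t}\,|v|^{\frac{4}{N-2}}v, \qquad v(0)=u_0 .
\]
Because $u=e^{-at}v$, global existence of $v$ on $[0,\infty)$ gives global existence of $u$ (so the $H^1$ blow-up alternative is avoided), and since $\|v(t)\|_{L^2}=e^{at}\|u(t)\|_{L^2}=\|u_0\|_{L^2}$ is conserved while $\|v(t)-e^{it\Delta}u_+\|_{H^1}=e^{at}\|u(t)-e^{-at}e^{it\Delta}u_+\|_{H^1}$, proving Theorem~\ref{theo-scat-ener} is equivalent to showing that, for $a$ large, $v$ is global and scatters in $H^1$ in the usual sense; the exponential scattering \eqref{defi-expo-scat} is then exactly ordinary $H^1$ scattering of $v$. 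The virtue of the transformation is that for large $a$ the nonlinearity becomes a genuinely small perturbation of the free flow, uniformly on the whole half-line.

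First I would regard the free evolution $\tilde v(t):=e^{it\Delta}u_0$ as an approximate solution of the $v$-equation: it solves that equation up to the error
\[
e:=\big(i\partial_t+\Delta\big)\tilde v-\mu e^{-\alpha a t}|\tilde v|^{\frac{4}{N-2}}\tilde v=-\mu\, e^{-\alpha a t}\,|\tilde v|^{\frac{4}{N-2}}\tilde v .
\]
Since $u_0\in H^1\subset\dot H^1$, the Strichartz norms of $\tilde v$ are finite and depend only on $u_0$; in particular the controlling norm $L:=\|\tilde v\|_{L^{2(N+2)/(N-2)}_{t,x}([0,\infty)\times\R^N)}\lesssim\|u_0\|_{\dot H^1}$ and the dual norm $\|\nabla(|\tilde v|^{\frac{4}{N-2}}\tilde v)\|_{L^2_tL^{2N/(N+2)}_x}$ are both finite. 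As the scalar weight $e^{-\alpha a t}$ commutes with the gradient, the error obeys $\|\nabla e\|_{L^2_tL^{2N/(N+2)}_x}=\|e^{-\alpha a t}\nabla(|\tilde v|^{\frac{4}{N-2}}\tilde v)\|_{L^2_tL^{2N/(N+2)}_x}$, and since $e^{-\alpha a t}\to0$ pointwise for $t>0$ while staying bounded by the fixed integrable majorant, dominated convergence yields $\|\nabla e\|_{L^2_tL^{2N/(N+2)}_x}\to0$ as $a\to\infty$. This is the \emph{only} place largeness of $a$ enters, and it is precisely what lets the scheme run for arbitrary (not small) data.

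Next I would feed $\tilde v$ and $e$ into the long-time perturbation (stability) lemma for the energy-critical Schr\"odinger equation, in the form due to Kenig--Merle and Tao--Visan--Zhang. Given $\|\tilde v\|_{L^{2(N+2)/(N-2)}_{t,x}}=L<\infty$, matching initial data $\tilde v(0)=u_0=v(0)$, and $\|\nabla e\|_{L^2_tL^{2N/(N+2)}_x}<\varepsilon_0(L)$, the lemma produces a true global solution $v$ on $[0,\infty)$ with finite critical norm that stays close to $\tilde v$. Choosing $a^*=a^*(u_0)$ so large that the error lies below $\varepsilon_0(L)$ for all $a>a^*$ then gives global existence. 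I stress that the sign $\mu\in\{\pm1\}$ plays no role here: the perturbation lemma is a pure iteration of Strichartz estimates on finitely many subintervals on which $\|\tilde v\|_{L^{2(N+2)/(N-2)}_{t,x}}$ is small, and it needs only the a priori bound $L$, which we obtain for free from the linear flow rather than from any energy sign condition. This is exactly why the focusing and defocusing cases are handled simultaneously, and why the bounded factor $e^{-\alpha a t}\le1$ leaves every nonlinear estimate undisturbed.

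Finally, once the global critical norm of $v$ is finite, scattering is routine: splitting $[0,\infty)$ into finitely many intervals of small critical norm propagates finiteness of the full (including $L^2$-based) Strichartz norms of $v$, so the Duhamel integral $u_+:=u_0-i\mu\int_0^\infty e^{-is\Delta}e^{-\alpha a s}|v|^{\frac{4}{N-2}}v\,ds$ converges in $H^1$ and $\|v(t)-e^{it\Delta}u_+\|_{H^1}\to0$, which is \eqref{defi-expo-scat} after undoing the substitution. The main obstacle is the energy-critical nonlinear analysis behind both the error bound and the perturbation lemma: in low dimensions $N\ge6$ the power $\frac{4}{N-2}<1$ makes $|v|^{\frac{4}{N-2}}v$ non-smooth, so the gradient estimates must be executed via the fractional chain rule and the usual Leibniz/paraproduct substitutes. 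This is standard but technical, and it is the part that requires the most care once the perturbative framework above is set up.
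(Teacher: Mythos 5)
Your argument is correct in outline, but it is a genuinely different route from the paper's. You perturb off the free evolution $e^{it\Delta}u_0$ on all of $[0,\infty)$, get smallness of the error $e^{-a\alpha t}\,|e^{it\Delta}u_0|^{\frac{4}{N-2}}e^{it\Delta}u_0$ in the dual Strichartz norm by dominated convergence against the fixed integrable majorant supplied by the linear Strichartz/Sobolev bounds, and then invoke the long-time stability lemma for the energy-critical NLS. The paper instead fixes a time $t_0>0$ inside the local existence interval (which is uniform in $a$ since the coefficient is bounded by $1$), writes Duhamel from $t_0$, and pulls the uniform factor $\sup_{t\ge t_0}e^{-\frac{4a}{N-2}t}=e^{-\frac{4a}{N-2}t_0}$ out of the nonlinear estimate; a single continuity/bootstrap argument for $\|v\|_{L^\gamma((t_0,T),W^{1,\rho})}$ then closes once $a$ is large, and the blow-up alternative gives $T^*=\infty$, with scattering read off from the Cauchy criterion. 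The paper's route is more elementary and self-contained: one application of Strichartz plus a one-step continuity argument, no subdivision of the time axis. Yours buys a cleaner smallness mechanism and treats $[0,\infty)$ in one stroke without separating off $[0,2t_0]$ or invoking the blow-up alternative, but it imports the full long-time perturbation theory -- iteration over a number of subintervals depending on $L=\|e^{it\Delta}u_0\|_{L^{2(N+2)/(N-2)}_{t,x}}$, and, for $N>6$ (you wrote ``low dimensions'' where you mean high), the exotic-Strichartz machinery needed because $z\mapsto|z|^{4/(N-2)}z$ fails to be $C^2$, a point you rightly flag. Two items to make explicit if you write this up: (i) the stability lemma must be restated for the equation with the time-dependent coefficient $e^{-a\alpha t}$; as you note this is harmless because the coefficient is bounded by $1$ and factors out of every difference estimate, but it is not literally the lemma as found in Kenig--Merle or Tao--Visan--Zhang; (ii) the resulting $a^*$ depends on the profile of $u_0$ through the rate in the dominated-convergence step, not merely on $\|u_0\|_{H^1}$, which is consistent with (and explains) the statement $a^*=a^*(u_0)$ rather than $a^*=a^*(\|u_0\|_{H^1})$.
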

	
	\begin{remark}
	This result shows an interesting effect of the linear damping term since the global well-posedness and scattering for the energy-critical NLS without damping are much more difficult to prove (see e.g. \cite{KM, CKSTT}). 
	\end{remark}
	
	Our next result is the following blow-up criteria for the focusing problem \eqref{NLS} in the mass-critical case.
	\begin{theorem} \label{theo-blow-mass-sigma}
		Let $N\geq 1$, $\alpha=\frac{4}{N}$ and $\mu=-1$. If $u_0 \in \Sigma$ satisfies one of the following conditions:
		\begin{itemize}
			\item $E(u_0)<0$, 
			\item $E(u_0) =0$ and $V(u_0)<0$, 
			\item $E(u_0)>0$ and $V(u_0) + \sqrt{2E(u_0) I(u_0)} <0$,
		\end{itemize}
		where $I$ and $V$ are as in \eqref{defi-I-V}, then there exists $a_*=a_*(\|u_0\|_{H^1})>0$ such that for all $0<a<a_*$, the corresponding solution to \eqref{DNLS} blows up in finite time.  
	\end{theorem}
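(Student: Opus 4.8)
The plan is to run the virial (variance) method, suitably weighted to absorb the damping. Since $u_0\in\Sigma$, the variance $I(t)=\|xu(t)\|_{L^2}^2$ and the quantity $V(t)=\int x\cdot\ima(\nabla u(t)\,\overline{u}(t))\,dx$ stay finite on $[0,T^*)$, and a direct computation from \eqref{DNLS} (differentiating the densities and using $\partial_t u=i\Delta u-au-i\mu|u|^\alpha u$) yields
\[
I'(t)=4V(t)-2aI(t),\qquad V'(t)=4E(t)-2aV(t),
\]
where $\alpha=\tfrac4N$ is used to reduce the focusing virial quantity $2\|\nabla u\|_{L^2}^2-\frac{N\alpha}{\alpha+2}\|u\|_{L^{\alpha+2}}^{\alpha+2}$ to exactly $4E(u)$. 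Introducing the weighted variance $f(t):=e^{2at}I(t)\ge0$, the two identities collapse to
\[
f''(t)=16\,e^{2at}E(t),
\]
and, setting $G(t):=e^{2at}E(t)=\tfrac1{16}f''(t)$, the energy law \eqref{energy-u} together with $2E-K=\frac{\alpha}{\alpha+2}\|u\|_{L^{\alpha+2}}^{\alpha+2}$ gives $G'(t)=a\frac{\alpha}{\alpha+2}e^{2at}\|u(t)\|_{L^{\alpha+2}}^{\alpha+2}\ge0$, so that $f''$ is non-decreasing.

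I would then reduce blow-up to the claim that $I(t)\to0$ at a finite time. Indeed, the uncertainty principle $\|u(t)\|_{L^2}^2\le\frac2N\|xu(t)\|_{L^2}\|\nabla u(t)\|_{L^2}$ combined with the mass law \eqref{mass-u} gives $\|\nabla u(t)\|_{L^2}\ge\frac N2 e^{-2at}\|u_0\|_{L^2}^2/\sqrt{I(t)}$; as the right-hand side is bounded below on finite intervals, $I(t)\to0$ at a finite time forces $\|u(t)\|_{H^1}\to\infty$ and hence $T^*<\infty$. It thus suffices to make $f$ vanish in finite time. The three hypotheses are exactly the conditions under which the undamped comparison polynomial $P(t):=I(u_0)+4V(u_0)t+8E(u_0)t^2$ — the value taken by $f$ when $a=0$, since then $f''\equiv16E(u_0)$ — has a first positive root $t_*$: a downward parabola when $E(u_0)<0$; a decreasing line when $E(u_0)=0$ and $V(u_0)<0$; and an upward parabola with positive smaller root precisely when $E(u_0)>0$ and $V(u_0)+\sqrt{2E(u_0)I(u_0)}<0$.

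The main obstacle will be that $E$ is not conserved. Because $K\le2E$, the stated data satisfy $K(u_0)<0$ whenever $E(u_0)\le0$, so the damping makes $E$ \emph{increase}; consequently the monotonicity of $G$ only gives $f''(t)\ge16E(u_0)$, hence $f\ge P$, which is the wrong inequality and in fact encodes the physical arrest of collapse for strong damping. The crux is therefore an \emph{upper} bound on $E(t)$ — equivalently on $\int_0^te^{2as}\|u(s)\|_{L^{\alpha+2}}^{\alpha+2}\,ds$ — on the pre-collapse interval, made small by taking $a$ small. I would obtain it through a continuity/bootstrap argument in which $a_*=a_*(\|u_0\|_{H^1})$ is produced: fixing the $O(1)$ time $t_*$ at which $P$ vanishes, one shows that for $0<a<a_*$ the energy remains below a fixed level on $[0,t_*]$, so that $f''=16e^{2at}E(t)$ stays close to (indeed below, when $E(u_0)<0$) the value $16e^{2at}E(u_0)$ and $f$ is driven to zero before $t_*$. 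The scales separate because the undamped vanishing time is $t_*=O(1)$, whereas the time $t_c=\frac1a\log(\|u_0\|_{L^2}/\|Q\|_{L^2})$ at which the decaying mass \eqref{mass-u} reaches the ground-state threshold (below which \cite{Darwich} gives global existence) tends to $\infty$ as $a\to0$, while the energy increment over $[0,t_*]$ is $O(a)$ until the solution concentrates. Controlling this increment uniformly up to the concentration time, against the blow-up of $\|u\|_{L^{\alpha+2}}$ there, is where the threshold $a_*$ enters.

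I would then close the three cases uniformly through $P$. For $E(u_0)<0$ one keeps $E(t)$ below a fixed negative level so that $f''<0$ on $[0,t_*]$ and $f$ is concave with $f(0)>0$, forcing a root. For $E(u_0)=0$ one uses $f'(0)=4V(u_0)<0$ together with the $O(a)$ smallness of $f''$ to push $f$ below zero near $t_*$. For $E(u_0)>0$ the smallness of $a$ keeps $f$ under a quadratic majorant $I(u_0)+4V(u_0)t+8(E(u_0)+o(1))t^2$ whose strictly positive discriminant is guaranteed by $V(u_0)+\sqrt{2E(u_0)I(u_0)}<0$, so its smaller root — near $t_*$ — is still attained by $f$. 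In every case $I(t)\to0$ at a finite time, and the uncertainty-principle estimate above yields finite-time blow-up.
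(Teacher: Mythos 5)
Your setup coincides with the paper's proof after the substitution $v=e^{at}u$: your weighted variance $f(t)=e^{2at}I(t)$ is exactly $\|xv(t)\|_{L^2}^2$, your identity $f''(t)=16e^{2at}E(u(t))=16H(v(t))$ is the paper's Corollary \ref{coro-viri-iden} specialized to $\alpha=4/N$ combined with \eqref{deri-ener}, your monotone quantity $G(t)=e^{2at}E(t)$ is $H(v(t))$, and your comparison polynomial $P$ is the paper's $f(t)=I(u_0)+4V(u_0)t+8E(u_0)t^2$ with the same root analysis of the three cases. You also correctly identify the only real difficulty: since $G'\geq 0$, the naive bound goes the wrong way, and one must show that the accumulated term $\frac{32a}{N+2}\int_0^{t}\!\!\int_0^{s}\!\!\int_0^{\tau}e^{2a\sigma}\|u(\sigma)\|_{L^{\alpha+2}}^{\alpha+2}\,d\sigma\,d\tau\,ds$ is small at the root $t_*$ of $P$ when $a$ is small.

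The gap is that you never actually prove this smallness: you announce ``a continuity/bootstrap argument'' in which the energy ``remains below a fixed level on $[0,t_*]$'' and speak of controlling the increment ``up to the concentration time, against the blow-up of $\|u\|_{L^{\alpha+2}}$ there,'' but no such bootstrap is carried out, and as phrased it would be delicate precisely because $\|u\|_{L^{\alpha+2}}$ could a priori blow up before $t_*$. The paper avoids this entirely by running the whole argument by contradiction: assume $T^*=\infty$; then $v\in C([0,\infty),H^1)$, so $\sup_{[0,t_*]}\|v(t)\|_{H^1}$ is finite on the compact interval $[0,t_*]$ and Sobolev embedding bounds $\sup_{[0,t_*]}\|v(t)\|_{L^{\alpha+2}}$, whence the error term is $O(a)\to 0$ and $\|xv(t_*)\|_{L^2}^2\leq f(t_*)/2<0$ contradicts nonnegativity outright. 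Under that hypothesis there is no ``concentration time'' to fight against, and your auxiliary steps (the uncertainty-principle reduction to $I(t)\to 0$, and the digression about the time $t_c$ at which the decaying mass crosses $\|Q\|_{L^2}$ and Darwich's global existence) are unnecessary. One caveat that survives even in the paper's version, and which you should make explicit if you fill the gap: for $a_*$ to depend only on $\|u_0\|_{H^1}$ the bound $\sup_{[0,t_*]}\|v(t)\|_{H^1}$ must be taken uniform in $a$, which one gets by iterating the local theory for \eqref{NLS} using that the coefficient $e^{-a\alpha t}\leq 1$ uniformly in $a>0$.
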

	
	\begin{remark}
		This result gives a rigorous proof for the existence of a threshold value for the mass-critical focusing problem \eqref{DNLS} observed by Fibich \cite{Fibich}.
	\end{remark}

	We also have the existence of finite time blow-up solutions with radially symmetric initial data. To state our result, we introduce the following function
	\begin{align} \label{defi-vartheta}
	\vartheta(r):= \left\{
	\begin{array}{ccc}
	2r &\text{if}& 0\leq r \leq 1, \\
	2[r-(r-1)^3] &\text{if}& 1<r\leq 1+1/\sqrt{3}, \\
	\vartheta' <0 &\text{if}& 1+1/\sqrt{3}<r<2, \\
	0 &\text{if}& r\geq 2,
	\end{array}
	\right.
	\end{align} 
	and
	\begin{align} \label{defi-theta}
	\theta(r):=\int_0^r \vartheta(s)ds.
	\end{align}
	For $R>0$ sufficiently large, we define the radial function
	\begin{align} \label{defi-chi-R}
	\chi_R(x)=\chi_R(r):= R^2 \theta(r/R), \quad r=|x|.
	\end{align}
	Denote
	\begin{align} \label{defi-J-W}
	J(u_0):= \int \chi_R|u_0|^2 dx, \quad W(u_0):= \int \nabla \chi_R \cdot \ima \left( \nabla u_0 \overline{u}_0\right) dx.
	\end{align}
	
	\begin{theorem} \label{theo-blow-mass-rad}
		Let $N\geq 2$, $\alpha=\frac{4}{N}$ and $\mu=-1$. If $u_0 \in H^1$ is radially symmetric and satisfies one of the following conditions:
		\begin{itemize}
			\item $E(u_0)<0$, 
			\item $E(u_0) =0$ and $W(u_0)<0$, 
			\item $E(u_0)>0$ and $W(u_0) + \sqrt{8E(u_0) J(u_0)} <0$,
		\end{itemize}
		then there exists $a_*=a_*(\|u_0\|_{H^1})>0$ such that for all $0<a<a_*$, the corresponding solution to \eqref{DNLS} blows up in finite time. 
	\end{theorem}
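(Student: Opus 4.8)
The plan is to run a localized virial argument parallel to the proof of Theorem \ref{theo-blow-mass-sigma}, replacing the variance $\|xu\|_{L^2}^2$ (which need not be finite for radial $H^1$ data) by the truncated weight $\chi_R$. Since $\chi_R$ and $\nabla\chi_R$ are bounded, the quantities $J$ and $W$ of \eqref{defi-J-W} are well-defined for every $u\in H^1$, which is the whole point of the truncation. First I would record the damped localized virial identities
\begin{align*}
\frac{d}{dt}J(u(t)) &= 2W(u(t)) - 2a J(u(t)), \\
\frac{d}{dt}W(u(t)) &= -2a W(u(t)) + \mathcal{G}_R(u(t)),
\end{align*}
where, using that $u$ is radial and $\mu=-1$,
\[
\mathcal{G}_R(u) = \int 2\chi_R''\,|\nabla u|^2\,dx - \frac12\int \Delta^2\chi_R\,|u|^2\,dx - \frac{\alpha}{\alpha+2}\int \Delta\chi_R\,|u|^{\alpha+2}\,dx .
\]
The two factors $-2a$ arise precisely from the damping term $iau$ in \eqref{DNLS}, in the same way that the damping produces the exponential in \eqref{mass-u}; the remaining terms are the usual Schr\"odinger virial terms.

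The key step is the virial estimate $\mathcal{G}_R(u) \le 8E(u) + \mathrm{Err}_R(u)$. By the construction of $\vartheta$ one has $\chi_R''\le 2$ and $\Delta\chi_R\le 2N$ everywhere, with equality on $\{|x|\le R\}$ where $\chi_R(x)=|x|^2$; consequently $\mathcal{G}_R(u)-8E(u)$ collapses to integrals supported on the annulus $\{R\le|x|\le 2R\}$, namely a nonpositive kinetic part $\int 2(\chi_R''-2)|\nabla u|^2$, a biharmonic part bounded by $CR^{-2}\|u\|_{L^2}^2$ since $\Delta^2\chi_R=O(R^{-2})$, and a nonlinear part controlled by $\int_{|x|\ge R}|u|^{\alpha+2}$. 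For the last one I would invoke the radial Sobolev (Strauss) embedding, valid for $N\ge 2$,
\[
\int_{|x|\ge R}|u|^{\alpha+2}\,dx \le C R^{-\frac{(N-1)\alpha}{2}}\,\|u\|_{L^2}^{\frac{\alpha}{2}+2}\,\|\nabla u\|_{L^2}^{\frac{\alpha}{2}},
\]
together with $\|u(t)\|_{L^2}\le\|u_0\|_{L^2}$ from \eqref{mass-u}. Because $\alpha=\tfrac4N$ gives $\tfrac{\alpha}{2}=\tfrac2N<2$, this remainder is subquadratic in $\|\nabla u\|_{L^2}$; its gradient part can be rendered a small multiple of $\|\nabla u\|_{L^2}^2$ plus a constant (Young's inequality) and absorbed, at the cost of taking $R=R(\|u_0\|_{H^1})$ large.

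With $\mathcal{G}_R(u(t))\le 8E(u(t))+\mathrm{Err}_R(t)$ in hand, the pair $(J,W)$ obeys the same structure as $(\|xu\|_{L^2}^2, \int x\cdot\ima(\nabla u\overline u))$ underlying Theorem \ref{theo-blow-mass-sigma}, now with the conserved energy replaced by $E(u(t))$ evolving through \eqref{energy-u} and with the small remainder $\mathrm{Err}_R$. I would then treat the three sign alternatives in parallel: from $J''=2\mathcal{G}_R-8aW+4a^2J\approx 16E(u(t))-8aW$, under each of $E(u_0)<0$; $E(u_0)=0$, $W(u_0)<0$; or $E(u_0)>0$, $W(u_0)+\sqrt{8E(u_0)J(u_0)}<0$, the limiting parabola $J(u_0)+2W(u_0)t+8E(u_0)t^2$ has a positive root — the discriminant condition $W(u_0)+\sqrt{8E(u_0)J(u_0)}<0$ being exactly what forces this — so $J(t)$ is driven to zero in finite time, contradicting $J\ge 0$, once $a$ is small. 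This yields finite-time blow-up for all $0<a<a_*(\|u_0\|_{H^1})$.

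The main obstacle is the tension between the two smallness requirements. The truncation remainder is not a fixed small constant: the nonlinear part grows like $\|\nabla u(t)\|_{L^2}^{2/N}$, which is unbounded precisely in the blow-up regime, so it cannot be absorbed once and for all. The decisive point is that $\tfrac2N<2$ makes this growth strictly subquadratic, hence of lower order than the coercive virial terms after $R$ is fixed large in terms of $\|u_0\|_{H^1}$; only then can $a_*=a_*(\|u_0\|_{H^1})$ be chosen so that the damping — which through \eqref{energy-u} tends to increase $E(u(t))$ toward zero and thereby to arrest the collapse — remains too weak to prevent $J$ from vanishing before the focusing mechanism shuts off. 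Keeping the order of quantifiers consistent ($R$ first, then $a_*$) in the presence of an unbounded gradient, exactly as in the argument for Theorem \ref{theo-blow-mass-sigma}, is the crux of the estimate.
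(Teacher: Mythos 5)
Your overall strategy---a localized virial inequality combined with the negative-parabola comparison and the observation that the energy increment vanishes as $a\to 0$---is the same as the paper's, which works with $v=e^{at}u$ and $V_{\chi_R}(t)=\int\chi_R|v(t)|^2\,dx$ rather than with $J,W$ for $u$ directly; that difference is cosmetic. The genuine gap is in your key step: the claim that the truncation error can be reduced to ``a small multiple of $\|\nabla u\|_{L^2}^2$ plus a constant'' and then absorbed. In the mass-critical case $\alpha=4/N$ there is nothing to absorb it into: the identity $8\|\nabla v\|_{L^2}^2-\frac{4N\alpha}{\alpha+2}e^{-a\alpha t}\|v\|_{L^{\alpha+2}}^{\alpha+2}=16H(v(t))$ consumes the kinetic energy exactly, so after extracting $16H$ no term $-c\|\nabla v\|_{L^2}^2$ with $c>0$ survives (unlike the supercritical case, where $-2(N\alpha-4)\|\nabla v\|_{L^2}^2$ is available and your Young-inequality absorption is precisely what Lemma \ref{lem-viri-est-super} implements). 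The only nonpositive gradient contribution left is $2\int(\chi_R''-2)|\nabla v|^2\,dx$, whose density vanishes like $(r/R-1)^2$ at $r=R$, so it cannot dominate an unweighted $\vareps\int_{|x|\ge R}|\nabla v|^2\,dx$ produced by the Strauss estimate plus Young. Nor does ``subquadratic, hence of lower order'' rescue the argument: under the contradiction hypothesis of global existence $\|\nabla u(t)\|_{L^2}$ has no a priori bound, so a remainder $C\|\nabla u(t)\|_{L^2}^{2/N}$ on the right-hand side of $\frac{d^2}{dt^2}V_{\chi_R}\le 16E(u_0)+\cdots$ is not uniformly small in $t$ and destroys the parabola comparison.

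The paper closes exactly this gap with the refined estimate of Lemma \ref{lem-viri-est-mass}: the nonlinear annulus term is estimated through a weighted radial Sobolev inequality so that the resulting gradient error carries the weight $C\vareps\chi_{2,R}^{N/2}$, where $\chi_{2,R}=2N-\Delta\chi_R$, and the cutoff is built from the cubic profile \eqref{defi-vartheta} precisely so that the pointwise inequality \eqref{posi-chi-12-R}, $\chi_{1,R}-C\vareps\chi_{2,R}^{N/2}\ge 0$, holds for small $\vareps$; both $\chi_{1,R}$ and $\chi_{2,R}$ vanish to second order at $r=R$, which is what makes this domination possible. After that the remaining errors are $O(R^{-2})$ uniformly in $t$, and the rest of your argument (choosing $t_j$ with $f_j(t_j)<0$, then $a_*$ small so that the triple time integral of $e^{-a\alpha\sigma}\|v(\sigma)\|_{L^{\alpha+2}}^{\alpha+2}$ up to $t_j$ is at most $-f_j(t_j)/2$) goes through as in the paper. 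You need either to prove \eqref{posi-chi-12-R} for your cutoff or to invoke the corresponding refined lemma; a generic quadratic-cap cutoff as in \eqref{defi-theta-super} together with the unweighted Strauss bound does not suffice here.
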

	
	Our next results concern the existence of finite time blow-up solutions for the focusing problem \eqref{DNLS} in the mass-supercritical and energy-subcritical case.
	\begin{theorem} \label{theo-blow-super-sigma}
		Let $N\geq 1$, $\frac{4}{N}<\alpha<\alpha^*$ and $\mu=-1$. If $u_0 \in \Sigma$ satisfies one of the following conditions:
		\begin{itemize}
			\item $E(u_0)<0$,
			\item $E(u_0) =0$ and $V(u_0)<0$,
			\item $E(u_0)>0$ and $V(u_0) + \sqrt{2 E(u_0) I(u_0)}<0$,
		\end{itemize}
		where $I$ and $V$ are as in \eqref{defi-I-V}, then there exists $a_*=a_*(\|u_0\|_{H^1})>0$ such that for all $0<a<a_*$, the corresponding solution to \eqref{DNLS} blows up in finite time.
	\end{theorem}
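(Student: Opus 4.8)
The plan is to run a virial/convexity argument adapted to the damping, in the spirit of Ohta--Todorova \cite{OT}. Since $u_0\in\Sigma$, the variance $I(t):=\|xu(t)\|_{L^2}^2$ and the weighted momentum $V(t):=\int x\cdot\ima(\nabla u\,\overline u)\,dx$ (extending \eqref{defi-I-V}) are well defined, and by the standard truncation/regularization argument (as in \cite{Cazenave,OT}) the map $t\mapsto I(t)$ is $C^2$ on $[0,T^*)$ and satisfies the damped virial identities
\begin{align*}
\frac{d}{dt}I(t) &= 4V(t)-2aI(t),\\
\frac{d}{dt}V(t) &= 2\|\nabla u(t)\|_{L^2}^2-\frac{N\alpha}{\alpha+2}\|u(t)\|_{L^{\alpha+2}}^{\alpha+2}-2aV(t).
\end{align*}
Recasting the second identity through $E$ as in \eqref{defi-K} gives $\tfrac{d}{dt}V=4E(u)-\tfrac{N\alpha-4}{\alpha+2}\|u\|_{L^{\alpha+2}}^{\alpha+2}-2aV$, while \eqref{energy-u} together with the algebraic relation $K=2E-\tfrac{\alpha}{\alpha+2}\|u\|_{L^{\alpha+2}}^{\alpha+2}$ (immediate from \eqref{defi-K}) yields the energy law $\tfrac{d}{dt}E(u)=-2aE(u)+\tfrac{a\alpha}{\alpha+2}\|u\|_{L^{\alpha+2}}^{\alpha+2}$. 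Recording these three identities is the first step; the second is to absorb the damping into integrating factors.

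The key device is to set $y(t):=e^{2at}I(t)$ and $\widetilde E(t):=e^{2at}E(u(t))$. The first virial identity gives $y'(t)=4e^{2at}V(t)$, and differentiating once more and inserting the second identity produces $y''(t)=16\widetilde E(t)-\tfrac{4(N\alpha-4)}{\alpha+2}e^{2at}\|u(t)\|_{L^{\alpha+2}}^{\alpha+2}$. The crucial point in the mass-supercritical regime $N\alpha-4>0$ is that the energy law rewrites as $\widetilde E'(t)=\tfrac{a\alpha}{\alpha+2}e^{2at}\|u(t)\|_{L^{\alpha+2}}^{\alpha+2}\ge 0$, so the troublesome $L^{\alpha+2}$ term is exactly $-\lambda\,\widetilde E'(t)$ with $\lambda:=\tfrac{4(N\alpha-4)}{a\alpha}$; that is,
\begin{align*}
y''(t)=16\,\widetilde E(t)-\lambda\,\widetilde E'(t).
\end{align*}
Because $\widetilde E$ is nondecreasing, setting $Q(t):=\lambda(\widetilde E(t)-E(u_0))\ge 0$ (so $Q(0)=0$ and $Q'\ge 0$) converts this into $y''(t)=16E(u_0)+\tfrac{16}{\lambda}Q(t)-Q'(t)$. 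Integrating twice from $0$ with $y(0)=I(u_0)$, $y'(0)=4V(u_0)$, and using $\int_0^t\!\int_0^s Q\le t\int_0^t Q$, all $Q$-contributions gather into $\big(\tfrac{16t}{\lambda}-1\big)\int_0^t Q$, which is $\le 0$ once $t\le \lambda/16$. Hence, for $0\le t\le \lambda/16$,
\begin{align*}
0\le y(t)\le \phi(t):=I(u_0)+4V(u_0)\,t+8E(u_0)\,t^2.
\end{align*}

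It remains to note that each of the three hypotheses is precisely the classical Glassey condition forcing the parabola $\phi$ to have a positive root $t_0$ at which $\phi$ vanishes: if $E(u_0)<0$ then $\phi$ opens downward; if $E(u_0)=0$ and $V(u_0)<0$ then $\phi$ is affine with negative slope; and if $E(u_0)>0$ the assumption $V(u_0)+\sqrt{2E(u_0)I(u_0)}<0$ forces $V(u_0)<0$ together with a positive discriminant, so that $t_0=\tfrac{-V(u_0)-\sqrt{V(u_0)^2-2E(u_0)I(u_0)}}{4E(u_0)}>0$. Crucially $t_0$ depends only on $u_0$, whereas $\lambda/16=\tfrac{N\alpha-4}{4a\alpha}\to\infty$ as $a\to 0$. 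Choosing $a_*:=\tfrac{N\alpha-4}{4\alpha\,t_0}$, for every $0<a<a_*$ we have $t_0<\lambda/16$, so evaluating the displayed bound at $t_0$ gives $0\le y(t_0)\le\phi(t_0)=0$, whence $I(t_0)=0$ and $u(t_0)\equiv 0$; this contradicts \eqref{mass-u}, which keeps the mass strictly positive. Therefore $T^*\le t_0<\infty$.

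The step I expect to demand the most care is \emph{not} the final ODE comparison but the structural observation underlying it: recognizing that the substitution $y=e^{2at}I$ decouples the damping in the first identity and, simultaneously, that the obstructive term $\tfrac{N\alpha-4}{\alpha+2}e^{2at}\|u\|_{L^{\alpha+2}}^{\alpha+2}$ coincides up to the factor $\lambda$ with $\widetilde E'$, which carries the favorable sign and confines the entire damping error to the region $t>\lambda/16$ that escapes to infinity as $a\to 0$. Alongside this, the rigorous justification of the $C^2$ regularity of $I(t)$ and of the two virial identities for merely $\Sigma$-valued solutions is technical but standard, and I would either reproduce the approximation argument of \cite{Cazenave,OT} or invoke it directly.
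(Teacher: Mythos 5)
Your proof is correct, and while it sits in the same virial/Glassey framework as the paper, it controls the damping contribution by a genuinely different mechanism. The paper passes to $v=e^{at}u$ (so that your $y(t)$ and $\widetilde E(t)$ are exactly $\|xv(t)\|_{L^2}^2$ and $H(v(t))$), \emph{discards} the favorable term $-\tfrac{4(N\alpha-4)}{\alpha+2}e^{-a\alpha t}\|v\|_{L^{\alpha+2}}^{\alpha+2}$ using $N\alpha>4$, and writes $\|xv(t)\|_{L^2}^2\le f(t)+B(t)$ with $f$ the Glassey parabola and $B$ a triple time integral carrying a prefactor $a$; assuming global existence, it bounds $\sup_{[0,t_0]}\|v(t)\|_{L^{\alpha+2}}$ by Sobolev embedding and concludes $B(t_0)\le -f(t_0)/2$ for $a$ small. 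You instead \emph{keep} that term, identify it as $-\lambda\widetilde E'(t)$ with $\lambda=\tfrac{4(N\alpha-4)}{a\alpha}$, and use its sign to absorb the growth of $\widetilde E$ on the window $t\le\lambda/16$, which covers $[0,t_0]$ once $a<a_*=\tfrac{N\alpha-4}{4\alpha t_0}$. Your route buys two things: it requires no a priori bound on the solution (the paper's constant $C(t_0)$ depends on the solution itself on $[0,t_0]$, and its uniformity in $a$ is left implicit), and it yields an explicit, quantitative threshold $a_*$. What it gives up: it hinges on $N\alpha-4>0$, so unlike the paper's smallness argument it does not transfer to the mass-critical case (where $\lambda=0$) nor, as written, to the localized virial identities used for radial data. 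Two points to make airtight: the damped identities $I'=4V-2aI$ and $V'=2\|\nabla u\|_{L^2}^2-\tfrac{N\alpha}{\alpha+2}\|u\|_{L^{\alpha+2}}^{\alpha+2}-2aV$ for $\Sigma$-solutions do need the regularization argument you flag (the paper's route is Corollary \ref{coro-viri-iden} applied to $v$), and your final contradiction at $\phi(t_0)=0$ uses $u_0\ne 0$, which indeed follows from each of the three hypotheses.
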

	
	\begin{remark}
		Corollary $\ref{theo-blow-super-sigma}$ was proved in \cite{OT} by using virial identities related to \eqref{DNLS}. Here we give a simple proof. 
	\end{remark}
	
	We next introduce $\theta: [0,\infty) \rightarrow [0,\infty)$ satisfying
	\begin{align} \label{defi-theta-super}
	\theta(r):= \left\{
	\begin{array}{ccc}
	r^2 &\text{if} & 0\leq r \leq 1, \\
	2 &\text{if} & r\geq 2,  
	\end{array}
	\right. \quad \text{and}\quad \theta''(r) \leq 2 \text{ for } r\geq 0.
	\end{align}
	Note that the function $\theta$ defined in \eqref{defi-theta} satisfies the above conditions. For $R>0$ sufficiently large, we define the radial function
	\begin{align} \label{defi-chi-R-super}
	\chi_R(x)=\chi_R(r):=R^2 \theta(r/R), \quad r = |x|.
	\end{align}
	
	\begin{theorem} \label{theo-blow-super-rad}
		Let $\mu=-1$ and
		\[
		\left\{
		\begin{array}{ccc}
		\frac{4}{N}<\alpha<\frac{4}{N-2} &\text{if}& N\geq 3, \\
		2<\alpha \leq 4 &\text{if} & N=2.
		\end{array}
		\right.
		\] 
		If $u_0 \in H^1$ is radially symmetric and satisfies one of the following conditions:
		\begin{itemize}
			\item $E(u_0)<0$,
			\item $E(u_0) =0$ and $W(u_0)<0$,
			\item $E(u_0)>0$ and $W(u_0) + \sqrt{2N\alpha E(u_0) J(u_0)}<0$,
		\end{itemize}
		where $J$ and $W$ are as in \eqref{defi-J-W}, then there exists $a_*=a_*(\|u_0\|_{H^1})>0$ such that for all $0<a<a_*$, the corresponding solution to \eqref{DNLS} blows up in finite time.
	\end{theorem}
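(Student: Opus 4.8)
The plan is to run a localized virial argument adapted to the damping, using the radial cut-off $\chi_R$ from \eqref{defi-chi-R-super} so that all quantities are finite for $u_0\in H^1$ (not merely $\Sigma$). Write $u(t)$ for the solution on its maximal interval $[0,T^*)$ and set
\[
J(t):=\int\chi_R|u(t)|^2\,dx,\qquad W(t):=\int\nabla\chi_R\cdot\ima(\nabla u(t)\,\overline{u}(t))\,dx,
\]
so that $J(0)=J(u_0)$, $W(0)=W(u_0)$ and $J(t)\ge0$. A direct computation with \eqref{DNLS} gives $J'(t)=2W(t)-2aJ(t)$, and differentiating $W$ and using that the damping $iau$ contributes a factor $-2a$ to the evolution of the momentum density $\ima(\overline{u}\nabla u)$ yields $W'(t)=G(t)-2aW(t)$, where $G$ is the \emph{undamped} localized virial functional attached to $\chi_R$. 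The role of the damping is then encoded in the substitution $w(t):=e^{2at}J(t)$, for which one checks directly that $w'(t)=2e^{2at}W(t)$ and $w''(t)=2e^{2at}G(t)$, with $w(t)\ge0$, $w(0)=J(u_0)$ and $w'(0)=2W(u_0)$. Thus $w$ obeys an undamped-looking second-order identity.

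Next I would establish the radial localized virial estimate. As in the undamped focusing case, but tracking the region $|x|>R$ where $\chi_R\ne|x|^2$, the properties of $\theta$ in \eqref{defi-theta-super} ($\theta''\le2$ and $\theta(r)=r^2$ near $0$) give
\[
2G(t)\le 4N\alpha\,E(u(t))-(2N\alpha-8)\|\nabla u(t)\|_{L^2}^2+\mathrm{Err}_R(t),
\]
with $\mathrm{Err}_R$ collecting the $\Delta^2\chi_R$ term and the $|x|>R$ nonlinear contribution. Here radial symmetry is essential: by the Strauss radial Sobolev inequality the nonlinear error is bounded by $CR^{-(N-1)\alpha/2}\|\nabla u\|_{L^2}^{\alpha/2}+CR^{-2}\|u_0\|_{L^2}^2$, and since the hypotheses force $\alpha<4$ (for $N\ge3$ one has $\alpha<\tfrac{4}{N-2}\le4$, for $N=2$ one has $\alpha\le4$), Young's inequality absorbs this into the favorable term $-(2N\alpha-8)\|\nabla u\|_{L^2}^2$ at the cost of an arbitrarily small loss, leaving an error of size $o_R(1)$.

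The main obstacle is that $E(u(t))$ is not conserved: by \eqref{energy-u} only $e^{2at}E(u(t))$ is nondecreasing, so $E(u(t))$ may \emph{grow} and cannot be replaced by $E(u_0)$. To get around this I would not drop the gradient term but substitute $\|\nabla u\|_{L^2}^2=2E(u(t))+\tfrac{2}{\alpha+2}\|u\|_{L^{\alpha+2}}^{\alpha+2}$, turning the estimate into $2G(t)\le 16\,E(u(t))-\tfrac{2(2N\alpha-8)}{\alpha+2}\|u(t)\|_{L^{\alpha+2}}^{\alpha+2}+o_R(1)$. Writing $\Phi(t):=\int_0^te^{2as}\|u(s)\|_{L^{\alpha+2}}^{\alpha+2}\,ds$ and using the integrated form of \eqref{energy-u}, namely $e^{2at}E(u(t))=E(u_0)+\tfrac{a\alpha}{\alpha+2}\Phi(t)$, the identity $w''=2e^{2at}G$ becomes
\[
w''(t)\le 16\,E(u_0)+\tfrac{16a\alpha}{\alpha+2}\Phi(t)-\tfrac{2(2N\alpha-8)}{\alpha+2}\Phi'(t)+o_R(1)\,e^{2at}.
\]
Integrating once and using $\int_0^t\Phi\le t\,\Phi(t)$ with $\Phi\ge0$, the two $\Phi$-terms combine with a nonpositive coefficient as soon as $a\le a_*$, with $a_*$ taken of order $(2N\alpha-8)/(\alpha T)$ on a fixed horizon $[0,T]$; this is exactly where the smallness of $a$ enters. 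The surviving terms integrate to
\[
w(t)\le J(u_0)+2W(u_0)\,t+8\,E(u_0)\,t^2+o_R(1)\,t,\qquad t\in[0,T].
\]

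Finally I would conclude by a discriminant argument. Since $2N\alpha>8$ in the admissible range, each hypothesis makes the right-hand side negative at some finite $t_*>0$: if $E(u_0)<0$ the parabola opens downward; if $E(u_0)=0$ it reduces to the line $J(u_0)+2W(u_0)t$ with $W(u_0)<0$; and if $E(u_0)>0$ the condition $W(u_0)+\sqrt{2N\alpha\,E(u_0)J(u_0)}<0$ forces $W(u_0)<0$ and $W(u_0)^2>2N\alpha\,E(u_0)J(u_0)\ge 8\,E(u_0)J(u_0)$, a positive discriminant. Fixing $T>t_*$, then $R$ large so the $o_R(1)$ terms do not destroy the root, and then $a<a_*=a_*(\|u_0\|_{H^1})$, one obtains $w(t_*)<0$, contradicting $w\ge0$; hence $T^*\le t_*<\infty$ and, by the blow-up alternative, the solution blows up in finite time. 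The crux is the energy step: reorganizing $G$ through the energy so that the damping-generated term $\tfrac{16a\alpha}{\alpha+2}\Phi$ is cancelled by $-\tfrac{2(2N\alpha-8)}{\alpha+2}\Phi'$ for small $a$, which is what lets the non-conserved energy be controlled by its initial value.
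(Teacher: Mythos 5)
Your proposal is correct, and at the decisive step it diverges from the paper's argument in an instructive way. Both proofs share the same skeleton: the substitution $w(t)=e^{2at}J(t)$ (equivalently the paper's change of variable $v=e^{at}u$, for which $w=V_{\chi_R}$ and your identities $J'=2W-2aJ$, $W'=G-2aW$ are exactly \eqref{firs-deri-viri}--\eqref{seco-deri-viri}), the radial localized virial inequality $w''\le e^{2at}\bigl(4N\alpha E(u(t))-(2N\alpha-8)\|\nabla u(t)\|_{L^2}^2+\mathrm{Err}_R\bigr)$ with the Strauss/Young absorption of the error terms, and a final discriminant argument. The difference is in how the non-conserved energy is tamed. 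The paper discards the negative gradient term outright, writes $e^{2at}E(u(t))=E(u_0)+\tfrac{a\alpha}{\alpha+2}\Phi(t)$, and controls the resulting triple time integral on a fixed interval $[0,t_0]$ by letting $a\to 0$ against a uniform bound $\sup_{[0,t_0]}\|v(t)\|_{H^1}\le C(t_0)$ supplied by the local theory. You instead keep the gradient term, convert it through the energy into $-\tfrac{2(2N\alpha-8)}{\alpha+2}\Phi'(t)$, and after one integration cancel the damping-generated term $\tfrac{16a\alpha}{\alpha+2}\int_0^t\Phi$ against it using only the monotonicity bound $\int_0^t\Phi\le t\,\Phi(t)$ and $a\le\tfrac{N\alpha-4}{4\alpha T}$. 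This buys you three things: no a priori $H^1$ bound on a compact time interval is needed (a point the paper asserts rather tersely); an explicit, quantitative $a_*$; and a better coefficient $8$ (in place of $2N\alpha$) in front of $E(u_0)t^2$, so that in the case $E(u_0)>0$ you only need $W(u_0)+\sqrt{8E(u_0)J(u_0)}<0$, which is implied by the stated hypothesis since $2N\alpha>8$. Two small inaccuracies that do not affect the argument: for $N=2$ the value $\alpha=4$ is allowed, and there the $R^{-2}\|\nabla u\|_{L^2}^2$ error is absorbed by taking $R$ large rather than by Young's inequality; and the twice-integrated error is $o_R(1)\cdot O(t^2)$ rather than $o_R(1)\,t$, which is still harmless on the fixed horizon $[0,T]$.
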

	
	Finally, we have the existence of finite time blow-up solutions to the focusing problem \eqref{NLS} in the energy-critical case. 
	\begin{theorem} \label{theo-blow-ener-sigma} 
		Let $N\geq 3$, $\alpha=\frac{4}{N-2}$ and $\mu=-1$. If $u_0 \in \Sigma$ satisfies one of the following conditions:
		\begin{itemize}
			\item $E(u_0)<0$,
			\item $E(u_0) =0$ and $V(u_0)<0$,
			\item $E(u_0)>0$ and $V(u_0) + \sqrt{2 E(u_0) I(u_0)}<0$,
		\end{itemize}
		where $I$ and $V$ are as in \eqref{defi-I-V}, then there exists $a_*=a_*(u_0)>0$ such that for all $0<a<a_*$, the corresponding solution to \eqref{DNLS} blows up in finite time.
	\end{theorem}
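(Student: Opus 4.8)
The plan is to adapt the virial (concavity) argument used for Theorem~\ref{theo-blow-super-sigma} to the endpoint exponent $\alpha=\frac{4}{N-2}$; the scheme is identical and only the admissible range of constants changes. For $u_0\in\Sigma$ the quantities $I(t):=\|xu(t)\|_{L^2}^2$ and $V(t):=\int x\cdot\ima(\nabla u\,\overline u)\,dx$ from \eqref{defi-I-V} are finite along the flow, and a direct computation from \eqref{DNLS} gives the damped virial identities $I'(t)=4V(t)-2aI(t)$ and $V'(t)=G(u(t))-2aV(t)$, where $G(u):=2\|\nabla u\|_{L^2}^2-\frac{N\alpha}{\alpha+2}\|u\|_{L^{\alpha+2}}^{\alpha+2}$. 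The first step is to absorb the first-order damping terms by setting $z(t):=e^{2at}I(t)$, which produces the clean identities $z'(t)=4e^{2at}V(t)$ and $z''(t)=4e^{2at}G(u(t))$.

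Next I would bring in the energy. From \eqref{energy-u}--\eqref{defi-K} one computes that $y(t):=e^{2at}E(u(t))$ satisfies $y'(t)=\frac{a\alpha}{\alpha+2}e^{2at}\|u(t)\|_{L^{\alpha+2}}^{\alpha+2}\ge0$, so $y$ is nondecreasing with $y(0)=E(u_0)$. Since $N\alpha=\frac{4N}{N-2}>4$, the algebraic inequality $G(u)\le4E(u)$ holds (it amounts to $\frac{4-N\alpha}{\alpha+2}\|u\|_{L^{\alpha+2}}^{\alpha+2}\le0$), whence $z''(t)\le16\,y(t)$. If the energy were conserved, i.e. if one could replace $y(t)$ by $E(u_0)$, this would give $z(t)\le I(u_0)+4V(u_0)t+8E(u_0)t^2$, and each of the three hypotheses forces this polynomial to possess a positive root $T_0$: a downward parabola when $E(u_0)<0$; the affine function $I(u_0)+4V(u_0)t$ with $V(u_0)<0$ when $E(u_0)=0$; and an upward parabola with negative vertex abscissa and positive discriminant precisely when $V(u_0)+\sqrt{2E(u_0)I(u_0)}<0$. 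At such a $T_0$ one would have $z(T_0)\le0$, which is impossible because $z(t)=e^{2at}\|xu(t)\|_{L^2}^2\ge0$ and $\|u(t)\|_{L^2}=e^{-at}\|u_0\|_{L^2}>0$ by \eqref{mass-u}. Hence $T^*\le T_0<\infty$.

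The step I expect to be the main obstacle is that the energy is genuinely not conserved here: $y$ is only monotone, so it cannot be replaced by $E(u_0)$ for free, and the correction $y(t)-E(u_0)=\frac{a\alpha}{\alpha+2}\int_0^t e^{2as}\|u(s)\|_{L^{\alpha+2}}^{\alpha+2}\,ds$ must be shown to be negligible on the finite interval $[0,T_0]$. I would handle this by a continuity argument in which $a$ is chosen small: fixing a threshold slightly below $E(u_0)$, one tracks the maximal time on which $y$ stays below it, runs the concavity bound there, and shows the solution is already driven to the parabolic root before $y$ can cross the threshold. The difficulty special to the energy-critical case is that there is no a priori $H^1$ bound on a fixed time interval depending only on $\|u_0\|_{H^1}$, so the control of $\int_0^t e^{2as}\|u\|_{L^{\alpha+2}}^{\alpha+2}\,ds$ must come from the profile of $u_0$; this is exactly why $a_*$ is allowed to depend on $u_0$ rather than on $\|u_0\|_{H^1}$.

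To supply that control I would compare with the undamped equation. Setting $a=0$ in the identities above (where now $E$ is conserved, so the bound $z''\le16E(u_0)$ is exact) shows that, under each of the three hypotheses, the solution $w$ of the focusing energy-critical NLS with datum $u_0$ blows up at some finite $T_{\mathrm{und}}$. For $a$ small the damped solution remains close to $w$ in $H^1$ on compact subintervals of $[0,T_{\mathrm{und}})$ by the perturbation theory for the energy-critical problem, the damping $iau$ being a bounded perturbation of small size; consequently $\|u(t)\|_{H^1}$, and with it $\int_0^t e^{2as}\|u\|_{L^{\alpha+2}}^{\alpha+2}\,ds$, is controlled up to just before $T_{\mathrm{und}}$, which closes the continuity argument and yields $z''<0$ on the interval needed to reach the root. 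The technical heart is thus this uniform-in-$a$ stability estimate up to (almost) the undamped blow-up time, which is where the energy-critical Strichartz analysis enters and which dictates the dependence $a_*=a_*(u_0)$.
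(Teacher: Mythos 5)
Your virial computation coincides with the paper's, just written in the original variables: your $z(t)=e^{2at}I(t)$ is $\|xv(t)\|_{L^2}^2$ for the transformed solution $v=e^{at}u$ of \eqref{NLS}, your $y(t)=e^{2at}E(u(t))$ is the energy $H(v(t))$ satisfying \eqref{deri-ener}, and the chain $z''=4e^{2at}G(u)\le 16y$ together with the three-case discussion of $f(t)=I(u_0)+4V(u_0)t+8E(u_0)t^2$ is exactly Corollary \ref{coro-viri-iden} combined with \eqref{deri-ener}. You also correctly isolate the only genuine issue (the monotone growth of $y$) and the reason why $a_*$ depends on the profile of $u_0$ rather than on $\|u_0\|_{H^1}$ in the energy-critical case.

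The gap is in your final step. You propose to control $y(t)-E(u_0)=\frac{a\alpha}{\alpha+2}\int_0^t e^{2as}\|u(s)\|_{L^{\alpha+2}}^{\alpha+2}\,ds$ uniformly in $a$ by perturbing off the undamped energy-critical solution $w$ on compact subintervals of $[0,T_{\mathrm{und}})$. But the undamped Glassey argument gives $\|xw(t)\|_{L^2}^2\le f(t)$, hence $T_{\mathrm{und}}\le T_0$ where $T_0$ is the first positive root of $f$; the stability estimate degenerates as $t\to T_{\mathrm{und}}$, so in general it cannot carry you to a time where $f$ (or its $\eta$-perturbed version) is actually negative, and the continuity argument does not close. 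The paper sidesteps this entirely by arguing by contradiction from the outset: assume $T^*=\infty$ for \eqref{NLS}; then $v\in C([0,\infty),H^1)$, so on the \emph{fixed} interval $[0,t_0]$ with $f(t_0)<0$ one has $\sup_{[0,t_0]}\|v(t)\|_{L^{\alpha+2}}\le\sup_{[0,t_0]}\|v(t)\|_{H^1}=:C(t_0)<\infty$ for free (this constant depends on $u_0$, not merely $\|u_0\|_{H^1}$, which is the source of $a_*=a_*(u_0)$); the doubly integrated correction term is then $O(a)$ and can be made $\le -f(t_0)/2$ for $a<a_*$, yielding $\|xv(t_0)\|_{L^2}^2\le f(t_0)/2<0$, a contradiction. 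No comparison with the undamped flow and no energy-critical stability theory are needed; if you replace your last two paragraphs by this contradiction structure, the rest of your argument goes through verbatim.
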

	
	\begin{theorem} \label{theo-blow-ener-rad}
		Let $N\geq 3$, $\alpha=\frac{4}{N-2}$ and $\mu=-1$. If $u_0 \in H^1$ is radially symmetric and satisfies one of the following conditions:
		\begin{itemize}
			\item $E(u_0)<0$,
			\item $E(u_0) =0$ and $W(u_0)<0$,
			\item $E(u_0)>0$ and $W(u_0) + \sqrt{\frac{8N}{N-2} E(u_0) J(u_0)}<0$,
		\end{itemize}
		where $J$ and $W$ are as in \eqref{defi-J-W}, then there exists $a_*=a_*(u_0)>0$ such that for all $0<a<a_*$, the corresponding solution to \eqref{DNLS} blows up in finite time.
	\end{theorem}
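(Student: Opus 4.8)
The plan is to run a localized virial argument adapted to the damping, using the radial cut-off $\chi_R$ from \eqref{defi-chi-R-super}. Since we only assume $u_0\in H^1$ radial (not $u_0\in\Sigma$), the exact variance $\|xu\|_{L^2}^2$ need not be finite, so I replace it by $J(u(t))=\int\chi_R|u(t)|^2\,dx$ from \eqref{defi-J-W}. Using \eqref{DNLS} I first establish the two evolution laws
\begin{equation*}
\frac{d}{dt}J(u(t))=2W(u(t))-2aJ(u(t)),\qquad \frac{d}{dt}W(u(t))=\mathcal G_R(t)-2aW(u(t)),
\end{equation*}
where $W$ is as in \eqref{defi-J-W} and $\mathcal G_R$ is the localized virial functional; the damping produces the factor $-2a$ in each identity because $J$ and $W$ are bilinear in $u$. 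Setting $z(t):=e^{2at}J(u(t))$ absorbs the damping: a direct computation gives $z(0)=J(u_0)$, $z'(t)=2e^{2at}W(u(t))$ (so $z'(0)=2W(u_0)$), and the clean second-order identity $z''(t)=2e^{2at}\,\mathcal G_R(t)$.

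Next I evaluate $\mathcal G_R$. For radial functions the Hessian terms collapse and
\begin{equation*}
\mathcal G_R(t)=2\int\chi_R''|\partial_r u|^2\,dx-\frac12\int\Delta^2\chi_R\,|u|^2\,dx-\frac{\alpha}{\alpha+2}\int\Delta\chi_R\,|u|^{\alpha+2}\,dx.
\end{equation*}
On $\{r\le R\}$ one has $\chi_R=|x|^2$, so this reproduces the exact virial, which in the energy-critical case equals $4K(u)$ with $K$ as in \eqref{defi-K}; rewriting $K$ through the energy via $4K(u)=\tfrac{8N}{N-2}E(u)-\tfrac{8}{N-2}\|\nabla u\|_{L^2}^2$ produces a spare negative multiple of $\|\nabla u\|_{L^2}^2$. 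The remaining contributions are supported in $\{r>R\}$: the biharmonic term is $O(R^{-2}\|u_0\|_{L^2}^2)$ by \eqref{mass-u}, and the nonlinear term is bounded, via the radial (Strauss) estimate $\|u\|_{L^\infty(|x|\ge R)}\lesssim R^{-(N-1)/2}\|u\|_{L^2}^{1/2}\|\nabla u\|_{L^2}^{1/2}$, by a quantity which after Young's inequality is absorbed into the spare $-\tfrac{8}{N-2}\|\nabla u\|_{L^2}^2$ once $R=R(u_0)$ is large. The outcome is $\mathcal G_R(t)\le \tfrac{8N}{N-2}E(u(t))+\delta$ with $\delta=\delta(R)$ as small as we wish.

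With these in hand the blow-up follows from an ODE comparison. By \eqref{mass-u}, $0\le z(t)\le R^2\|u_0\|_{L^2}^2$ is bounded. If the energy were conserved, integrating $z''\le\tfrac{16N}{N-2}e^{2at}E(u_0)$ twice would give $z(t)\le J(u_0)+2W(u_0)t+\tfrac{8N}{N-2}E(u_0)t^2+o(1)$, and each of the three hypotheses is exactly the statement that the comparison parabola $J(u_0)+2W(u_0)t+\tfrac{8N}{N-2}E(u_0)t^2$ takes a negative value at some finite $t>0$: downward-opening when $E(u_0)<0$; a decreasing line when $E(u_0)=0$ and $W(u_0)<0$; and an upward parabola with negative minimum precisely when $W(u_0)+\sqrt{\tfrac{8N}{N-2}E(u_0)J(u_0)}<0$. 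This would force $z<0$, contradicting $z\ge0$, hence finite-time blow-up.

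The main obstacle is that the energy is not conserved: by \eqref{energy-u}, $\frac{d}{dt}E(u(t))=-aK(u(t))$, and since $K<0$ in the focusing collapse regime the energy actually increases toward $0$, so the negative-energy mechanism can be arrested when $a$ is large — which is exactly why the smallness $0<a<a_*$ is genuinely needed. To close the argument I restrict to the finite interval up to (slightly beyond) the root of the comparison parabola and control the energy drift there: as long as $\|u(t)\|_{H^1}$ stays bounded on that interval one has $|E(u(t))-E(u_0)|\lesssim a$, while if $\|u(t)\|_{H^1}$ is unbounded on it the solution has already blown up. Choosing $a_*=a_*(u_0)$ small enough that this $O(a)$ drift cannot destroy the strict negativity of the comparison parabola, the perturbed $z$ still becomes negative in finite time, contradicting $z\ge0$. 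Quantifying the energy drift over the collapse time is the delicate point of the proof; the radial error control of the previous paragraph is the other technical ingredient.
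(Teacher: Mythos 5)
Your proposal is correct and is essentially the paper's own proof in disguise: the substitution $z(t)=e^{2at}J(u(t))$ is exactly the localized virial $V_{\chi_R}(t)=\int\chi_R|v(t)|^2\,dx$ of the rescaled solution $v=e^{at}u$ used in the paper, your bound $\mathcal G_R\le \tfrac{8N}{N-2}E(u)+\delta$ is the paper's Lemma \ref{lem-viri-est-super} combined with the identity $8\|\nabla v\|^2-8e^{-a\alpha t}\|v\|^{\alpha+2}_{L^{\alpha+2}}=\tfrac{16N}{N-2}H(v)-\tfrac{16}{N-2}\|\nabla v\|^2$, and your $O(a)$ control of the energy drift on the fixed interval up to the root of the comparison parabola is the paper's argument that the triple-integral correction tends to $0$ as $a\to 0$. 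The parabola discriminant analysis matches the stated threshold $W(u_0)+\sqrt{\tfrac{8N}{N-2}E(u_0)J(u_0)}<0$, so nothing essential is missing.
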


	\subsection{Idea of the proofs.}
	By using the change of variable 
	\begin{align} \label{chan-vari}
	v(t,x):= e^{at} u(t,x),
	\end{align}
	we see that 
	\[
	u \text{ solves } \eqref{DNLS} \Leftrightarrow v \text{ solves }\eqref{NLS},
	\]
	where
	\begin{equation} \label{NLS}
	\left\{ 
	\begin{array}{rcl}
	i\partial_t v + \Delta v &=& \mu e^{-a\alpha t} |v|^\alpha v, \quad (t,x) \in [0,\infty) \times \R^N, \\
	v(0,x)&=& u_0(x).
	\end{array}
	\right.
	\end{equation}
	Therefore, the global existence, scattering and finite time blow-up for \eqref{DNLS} are reduced to that of \eqref{NLS}. Since $t\mapsto e^{-a\alpha t}$ is bounded in $[0,\infty)$, the local well-posedness for \eqref{NLS} follows directly from the classical NLS (see e.g. \cite{Cazenave}). Moreover, local solutions satisfy the conservation of mass, i.e.
	\[
	M(v(t)) = \|v(t)\|^2_{L^2} = M(u_0),
	\]
	and the energy functional
	\[
	H(v(t)) :=\frac{1}{2} \|\nabla v(t)\|^2_{L^2} +\frac{\mu}{\alpha+2} e^{-a\alpha t} \|v(t)\|^{\alpha+2}_{L^{\alpha+2}}
	\]
	satisfies
	\begin{align} \label{deri-ener}
	\frac{d}{dt} H(v(t)) = -\frac{a\alpha\mu}{\alpha+2} e^{-a\alpha t} \|v(t)\|^{\alpha+2}_{L^{\alpha+2}}
	\end{align}
	for any $t$ in the existence time. 
	
	Due to the exponential decay in front of the nonlinearity, Strichartz estimates and the standard continuity argument imply the global well-posedness and scattering in the energy-critical case once the damping parameter is large. 
	
	The proofs of blow-up criteria for the focusing problem \eqref{NLS} are inspired by classical arguments of Glassey \cite{Glassey} (for data in weighted $L^2$ space) and Ogawa-Tsutsumi \cite{OT} using \eqref{deri-ener} (for radially symmetric data). The main difficulty comes from the fact that the energy functional is no longer conserved. It is in fact increasing in time. We have from \eqref{deri-ener} that
	\begin{align} \label{ener-iden}
	H(v(t)) = E(u_0) + \frac{a\alpha}{\alpha+2} \int_0^t e^{-a\alpha s} \|v(s)\|^{\alpha+2}_{L^{\alpha+2}} ds 
	\end{align}
	for all $t$ in the existence time. Thanks to the exponential decay in time, Sobolev embedding $H^1 \subset L^{\alpha+2}$ and the fact that $H^1$-norm of solution depends on initial data, we see that for a fixed time, the second term in the right hand side of \eqref{ener-iden} becomes small once $a$ tends to zero. Using this fact, we are able to show the existence of finite time blow-up solutions to \eqref{NLS}. We also refer the interested reader to \cite{KO, Ozsari} for finite time blow-up of \eqref{DNLS} in 1D.
	
	This paper is organized as follows. In Section $\ref{S2}$, we prove the global existence and scattering for the energy-critical \eqref{DNLS} with large damping parameter. In Section $\ref{S3}$, we give the proofs of blow-up criteria given in Theorems $\ref{theo-blow-mass-sigma}$--$\ref{theo-blow-ener-rad}$.

	\section{Global existence and scattering in the energy-critical case}
	\label{S2}
	\setcounter{equation}{0}
	We first recall the local well-posedness for \eqref{NLS}. Since $t\mapsto e^{-a\alpha t}$ is bounded in $[0,\infty)$, the local well-posedness for \eqref{NLS} follows directly from that of the classical NLS (see e.g. \cite{Cazenave}).
	
	\begin{lemma} [LWP in the energy-subcritical case \cite{Cazenave}] Let $N\geq 1$, $a>0$, $0<\alpha<\alpha^*$ and $\mu \in \{\pm 1\}$. Let $u_0 \in H^1$. Then there exist $T^* \in (0,\infty]$ and a unique solution to \eqref{NLS} satisfying
	\[
	v \in C([0,T^*), H^1) \cap L^q_{\loc}([0,T^*),W^{1,r})
	\]
	for any Schr\"odinger admissible pair $(q,r)$. The maximal time of existence satisfies the blow-up alternative: if $T^*<\infty$, then $\lim_{t\rightarrow T^*} \|v(t)\|_{H^1} =\infty$. Moreover, there is conservation of mass, and the energy satisfies \eqref{deri-ener} for all $t\in [0,T^*)$.
	\end{lemma}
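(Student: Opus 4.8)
The plan is to treat \eqref{NLS} as a classical energy-subcritical nonlinear Schr\"odinger equation, exploiting that the only difference from the undamped equation is the scalar factor $\mu e^{-a\alpha t}$ in front of the nonlinearity, which satisfies $0<e^{-a\alpha t}\le 1$ for all $t\ge 0$. Because this weight is bounded, every quantity appearing in the standard local theory can be estimated by the corresponding undamped quantity, with constants independent of $t$; the whole conclusion therefore reduces to the Cazenave--Weissler fixed-point scheme, which is why the result is attributed to the classical theory.

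First I would write the Duhamel formulation
\[
v(t) = e^{it\Delta}u_0 - i\mu\int_0^t e^{i(t-s)\Delta}\, e^{-a\alpha s}\,|v(s)|^\alpha v(s)\,ds,
\]
and set up a contraction on a closed ball of the Strichartz space $C([0,T],H^1)\cap L^q([0,T],W^{1,r})$ for a suitable $H^1$-admissible pair $(q,r)$. Applying the inhomogeneous Strichartz estimate together with the $H^1$-subcritical nonlinear estimate (valid precisely because $0<\alpha<\alpha^*$ guarantees $H^1\hookrightarrow L^{\alpha+2}$ with the required exponent bookkeeping), and bounding $e^{-a\alpha s}\le 1$ pointwise, one recovers exactly the closing inequalities of the undamped case, with a gain of a positive power of $T$. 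For $T=T(\|u_0\|_{H^1})$ small this yields a unique fixed point, hence local existence and uniqueness in the stated class; uniqueness in $C([0,T^*),H^1)$ then follows by the usual Strichartz--Gr\"onwall argument. The blow-up alternative is immediate from the fact that the existence time depends only on $\|u_0\|_{H^1}$: a solution continues as long as its $H^1$-norm remains bounded, so if $T^*<\infty$ then $\lim_{t\to T^*}\|v(t)\|_{H^1}=\infty$.

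For the conservation laws I would argue formally first and then regularize. Testing the equation against $\overline v$ and integrating, the terms $\int \Delta v\,\overline v$ and $\int e^{-a\alpha t}|v|^{\alpha+2}$ are real, so taking imaginary parts gives $\tfrac{d}{dt}\|v\|_{L^2}^2=0$, i.e.\ conservation of mass. For the energy, writing $g(t)=\mu e^{-a\alpha t}$, differentiating $H(v(t))$ produces the explicit term $\tfrac{g'(t)}{\alpha+2}\|v\|_{L^{\alpha+2}}^{\alpha+2}$ from the time dependence of $g$, while the remaining terms combine, using $i\partial_t v=-\Delta v+g(t)|v|^\alpha v$, into $\operatorname{Re}\langle i\partial_t v,\partial_t v\rangle=0$; since $g'(t)=-a\alpha\,\mu e^{-a\alpha t}$ this is exactly \eqref{deri-ener}.

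The only genuine point requiring care --- the main, and rather minor, obstacle --- is that these two computations use $\partial_t v\in L^2$, which is not available at $H^1$ regularity. This is handled in the standard way: one approximates $u_0$ in $H^1$ by $H^2$ (or smoother) data, for which the solutions are regular enough that the mass and energy identities hold rigorously, and then passes to the limit using the continuous dependence on data furnished by the same contraction estimate. I expect no difficulty beyond this routine approximation step, precisely because the bounded weight $e^{-a\alpha t}$ prevents any loss relative to the undamped problem.
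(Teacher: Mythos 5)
Your proposal is correct and matches the paper's approach: the paper proves this lemma simply by observing that $t\mapsto e^{-a\alpha t}$ is bounded on $[0,\infty)$, so the result follows directly from the classical Cazenave--Weissler local theory for NLS, which is exactly the reduction you implement (Strichartz contraction with $e^{-a\alpha s}\le 1$, subcritical time gain giving the blow-up alternative, and the mass/energy identities via the standard regularization). Your explicit verification of \eqref{deri-ener}, including the extra term from differentiating the weight $g(t)=\mu e^{-a\alpha t}$, is the correct bookkeeping that the paper leaves implicit.
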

	
	\begin{lemma} [LWP in the energy-critical case \cite{Cazenave}] Let $N\geq 3$, $a>0$, $\alpha=\frac{4}{N-2}$ and $\mu \in \{\pm 1\}$. Let $u_0 \in H^1$. Then there exist $T^* \in (0,\infty]$ and a unique solution to \eqref{NLS} satisfying
	\[
	v \in C([0,T^*),H^1) \cap L^q_{\loc}([0,T^*),W^{1,r})
	\]
	for any Schr\"odinger admissible pair $(q,r)$. The maximal time of existence satisfies if $T^*<\infty$, then $\|v\|_{L^q((0,T^*),W^{1,r})}=\infty$ for any Schr\"odinger admissible pair $(q,r)$ satisfying $2<r<N$. Moreover, there is conservation of mass, and the energy satisfies \eqref{deri-ener} for all $t\in [0,T^*)$.
	\end{lemma}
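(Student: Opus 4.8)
The plan is to run the Cazenave--Weissler fixed-point scheme for \eqref{NLS} and to observe that the sole difference from the undamped energy-critical NLS is the scalar factor $e^{-a\alpha t}$, which satisfies $0<e^{-a\alpha t}\le 1$ on $[0,\infty)$ and hence never enlarges any of the relevant space-time norms. Starting from the Duhamel representation
\[
v(t)=e^{it\Delta}u_0 - i\mu\int_0^t e^{i(t-s)\Delta}\,e^{-a\alpha s}\,|v(s)|^\alpha v(s)\,ds=:\Phi(v)(t),
\]
I would set up a contraction for $\Phi$ on a ball
\[
X=\Big\{v\in C([0,T],H^1)\cap L^q([0,T],W^{1,r}) : \|v\|_{L^q_tW^{1,r}_x}\le M,\ \|v\|_{L^\infty_tH^1_x}\le M\Big\}
\]
for a suitable energy-critical Schr\"odinger admissible pair $(q,r)$ with $2<r<N$.

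Next I would invoke the inhomogeneous Strichartz estimate together with the fractional product rule, H\"older in space-time, and Sobolev embedding (using $\alpha=\tfrac{4}{N-2}$) to get the nonlinear bound
\[
\Big\|\int_0^t e^{i(t-s)\Delta} e^{-a\alpha s}|v|^\alpha v\,ds\Big\|_{L^q_tW^{1,r}_x}\lesssim \big\||v|^\alpha v\big\|_{L^{q'}_tW^{1,r'}_x}\lesssim \|v\|_{L^q_tW^{1,r}_x}^{\alpha+1},
\]
where the factor $e^{-a\alpha s}\le 1$ is harmlessly absorbed into the implicit constant. Since in the critical case the free Strichartz norm $\|e^{it\Delta}u_0\|_{L^q((0,T),W^{1,r})}$ tends to $0$ as $T\to0$, I can pick $T$ small enough (depending on the \emph{profile} of $u_0$, not merely on $\|u_0\|_{H^1}$) so that $\Phi$ maps $X$ into itself and is a contraction; the contraction mapping principle then gives a unique solution on $[0,T]$, and the standard continuation produces the maximal time $T^*$. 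For the blow-up alternative I would argue by contraposition: if $T^*<\infty$ while $\|v\|_{L^q((0,T^*),W^{1,r})}<\infty$ for one such pair, then Strichartz estimates bound $v$ in every admissible norm on $(0,T^*)$, the free evolution starting from times near $T^*$ has small Strichartz norm, and the local construction can be restarted past $T^*$, contradicting maximality; the equivalence of the criterion across all admissible pairs with $2<r<N$ is again a consequence of Strichartz.

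It remains to establish the conservation laws. For the mass, pairing the equation in the form $i\partial_t v=-\Delta v+\mu e^{-a\alpha t}|v|^\alpha v$ with $\bar v$, integrating in $x$ and taking imaginary parts annihilates the (real) quantities $\int\nabla v\cdot\nabla\bar v$ and $\int|v|^{\alpha+2}$, so $\tfrac{d}{dt}\|v\|_{L^2}^2=0$. For \eqref{deri-ener} I would differentiate $H(v(t))$: the contribution coming through $\partial_t v$ cancels exactly as in the conservative problem once paired with the equation, while the explicit $t$-dependence of the coefficient leaves the surviving term $-\tfrac{a\alpha\mu}{\alpha+2}e^{-a\alpha t}\|v\|_{L^{\alpha+2}}^{\alpha+2}$, which is precisely \eqref{deri-ener}.

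The factor $e^{-a\alpha t}$ is not the difficulty — it only helps. The genuine technical point is the rigorous justification of both identities at the $H^1$ level, since an $H^1$ solution is not a priori regular enough to differentiate $\|v\|_{L^2}^2$ and $H(v(t))$ directly. I would therefore first prove them for $H^2$ (or smooth) data, using the persistence of regularity furnished by the same fixed point, and then pass to the limit by the Lipschitz dependence on the data coming from the contraction estimate, exactly as in Cazenave's treatment. The one computation requiring care is the energy-critical nonlinear estimate above — distributing a derivative onto $|v|^\alpha v$ in $L^{q'}_tW^{1,r'}_x$ via H\"older and Sobolev embedding — but this is identical to the undamped case and thus reduces the lemma to the classical theory.
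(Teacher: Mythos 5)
Your proposal is correct and coincides with the paper's own treatment: the paper gives no independent proof, stating only that since $t\mapsto e^{-a\alpha t}$ is bounded on $[0,\infty)$, the result follows directly from the classical energy-critical theory in Cazenave's book, which is exactly the Cazenave--Weissler fixed-point scheme you carry out (with the factor $e^{-a\alpha s}\le 1$ absorbed into the constants, smallness coming from the free evolution's Strichartz norm so that $T$ depends on the profile of $u_0$, and the conservation laws justified by regularization). Your observation that the only new ingredient is the formally computed extra term $-\tfrac{a\alpha\mu}{\alpha+2}e^{-a\alpha t}\|v\|_{L^{\alpha+2}}^{\alpha+2}$ in \eqref{deri-ener} is precisely what the paper relies on.
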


	Thanks to \eqref{chan-vari}, Theorem $\ref{theo-scat-ener}$ follows directly from the following result.
	\begin{lemma}
		Let $N\geq 3$, $\alpha=\frac{4}{N-2}$ and $\mu\in \{\pm 1\}$. Let $u_0 \in H^1$. Then there exists $a^*=a^*(u_0)>0$ such that for all $a>a^*$, the corresponding solution to \eqref{NLS} exists globally in time and scatters in $H^1$, i.e. there exists $u_+\in H^1$ such that 
			\[
			\lim_{t\rightarrow \infty} \|v(t) - e^{it\Delta} u_+\|_{H^1} =0.
			\]
	\end{lemma}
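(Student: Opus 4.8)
The plan is to recast the statement as a global finiteness result for the energy-critical Strichartz norm of the solution to \eqref{NLS}, exploiting the decay of the coefficient $e^{-a\alpha t}$. I will work at the $H^1$ level, i.e. with $\langle\nabla\rangle v$ measured in the scattering space $L^\infty_t(L^2_x)\cap L^2_t(L^{2N/(N-2)}_x)$, using the Strichartz-admissible pairs $(\infty,2)$ and the endpoint $(2,\frac{2N}{N-2})$ (admissible for $N\ge 3$). By the local well-posedness lemma and its Strichartz blow-up alternative, it suffices to prove that $\|\langle\nabla\rangle v\|_{L^2_t(L^{2N/(N-2)}_x)}$ stays finite on $[0,T^*)$: this forces $T^*=\infty$ and, by the usual argument that $e^{-it\Delta}v(t)$ is Cauchy in $H^1$, yields scattering. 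Two features make the situation favorable and, in particular, independent of the sign $\mu$: the nonlinear estimates only see $|e^{-a\alpha s}\nabla(|v|^\alpha v)|\lesssim |v|^\alpha|\nabla v|$, so they are dominated by the undamped ones with $a$-independent constants; and the $\dot{H}^1$ control I will use comes from the \emph{energy endpoint} of Strichartz, not from the (non-conserved) energy $H$, so the focusing and defocusing cases are treated identically.

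First I would isolate a profile-dependent initial slab. Since $u_0\in H^1$, Strichartz gives $\|\langle\nabla\rangle e^{it\Delta}u_0\|_{L^2_t(L^{2N/(N-2)}_x)([0,\infty))}\lesssim\|u_0\|_{H^1}<\infty$, so the integrand is time-integrable and I may choose $T_0=T_0(u_0)>0$ for which this norm over $[0,T_0]$ lies below the small-data threshold $\delta$. Because $e^{-a\alpha s}\le 1$, the standard energy-critical contraction (with $a$-independent constants) then produces the solution on $[0,T_0]$ with $\|\langle\nabla\rangle v\|_{L^2_t(L^{2N/(N-2)}_x)([0,T_0])}\lesssim\delta$ and $\|v(T_0)\|_{H^1}\lesssim\|u_0\|_{H^1}$. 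This step needs no largeness of $a$; it is exactly here that the dependence on the \emph{profile} of $u_0$ (not merely its norm) enters, which is why the threshold is $a^*=a^*(u_0)$.

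The crux is the continuation on $[T_0,\infty)$, where largeness of $a$ finally intervenes. On this interval the coefficient is \emph{uniformly} small: $e^{-a\alpha s}\le e^{-a\alpha T_0}=:\varepsilon(a)\to 0$ as $a\to\infty$. Restarting from $v(T_0)$ and running a continuity argument in $A(t):=\|v\|_{L^\infty([T_0,t];H^1)}$ and $S(t):=\|\langle\nabla\rangle v\|_{L^2([T_0,t];L^{2N/(N-2)})}$, Strichartz together with the pointwise bound above, Hölder, and Sobolev ($\|v\|_{L^{2N/(N-2)}_x}\lesssim\|\nabla v\|_{L^2_x}$, using $\alpha\cdot\frac{N-2}{2N}=\frac{2}{N}$) give, on every subinterval $[T_0,t]$,
\[
S\lesssim \|v(T_0)\|_{H^1}+\varepsilon(a)\,A^{\alpha}S,\qquad A\lesssim \|v(T_0)\|_{H^1}+\varepsilon(a)\,A^{\alpha}S .
\]
Choosing $a>a^*(u_0)$ so large that $\varepsilon(a)\,(C\|u_0\|_{H^1})^{\alpha}$ falls below the absorption threshold, the first inequality closes to $S\lesssim\|u_0\|_{H^1}$, the second then bounds $A$, and a standard bootstrap promotes both bounds to all of $[T_0,\infty)$. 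Combined with the initial slab, the global scattering norm is finite, whence $T^*=\infty$ and $v$ scatters in $H^1$.

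The genuine difficulty — and the reason the energy-critical problem is hard \emph{without} damping — is that $e^{-a\alpha t}$ is scale-invariant under the critical scaling, so on any \emph{fixed} interval, where $e^{-a\alpha s}$ ranges only in $[\,\cdot\,,1]$, enlarging $a$ produces \emph{no} gain in the scale-invariant energy-critical nonlinear estimate: one cannot simply make the nonlinearity small. The entire point of the split is that only on the tail $[T_0,\infty)$ is $e^{-a\alpha s}$ uniformly bounded by $\varepsilon(a)$, which legitimately supplies the small factor $\varepsilon(a)A^{\alpha}$ needed to absorb the nonlinear term; correspondingly the slab $[0,T_0]$ must be controlled by the small-data theory alone, forcing the profile-dependence of $T_0$ and hence of $a^*$. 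The one point to watch is that the $\dot{H}^1$ a priori bound be extracted from the Strichartz energy estimate rather than from the functional $H$, since $H$ is monotone only in the defocusing case; doing so is what makes the whole argument uniform in $\mu\in\{\pm 1\}$.
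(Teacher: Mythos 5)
Your proposal is correct and follows essentially the same route as the paper: both split time at a profile-dependent $t_0$, control the initial slab by the ($a$-uniform) local theory, extract the uniformly small factor $e^{-a\alpha t_0}$ on the tail to close a Strichartz continuity argument for $a$ large, and then obtain scattering from the Cauchy criterion for $e^{-it\Delta}v(t)$ in $H^1$. The only differences are cosmetic: you work with the endpoint pair $\left(2,\tfrac{2N}{N-2}\right)$ together with $L^\infty_t L^2_x$, whereas the paper uses the single non-endpoint pair $(\gamma,\rho)=\left(\tfrac{2N}{N-2},\tfrac{2N^2}{N^2-2N+4}\right)$, and you set up the initial slab via the small-data argument on the linear flow rather than by invoking $v\in L^\gamma_{\loc}([0,T^*),W^{1,\rho})$ from the local well-posedness lemma.
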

	
	\begin{proof}
		Let $T^*$ be the maximal time of existence. Fix $t_0>0$ satisfying $2t_0<T^*$. Denote		
		\begin{align} \label{defi-gam-rho}
		\gamma:= \frac{2N}{N-2}, \quad \rho:= \frac{2N^2}{N^2-2N+4}, \quad n:=\frac{2N^2}{N^2-4N+4}.
		\end{align}
		Note that $(\gamma,\rho)$ is a Schr\"odinger admissible pair and $W^{1,\rho} \subset L^n$. Using the Duhamel formula
		\[
		v(t) = e^{it\Delta} v(t_0) -\mu \int_{t_0}^t e^{i(t-s)\Delta} e^{-\frac{4a}{N-2} s} |v(s)|^{\frac{4}{N-2}} v(s) ds 
		\]
		and Strichartz estimates, we have for any $T\in (t_0,T^*)$,
		\begin{align*}
		\|v\|_{L^\gamma((t_0,T),W^{1,\rho})} &\leq \|e^{it\Delta} v(t_0)\|_{L^\gamma((t_0,T),W^{1,\rho})} + \|e^{-\frac{4a}{N-2} t} |v|^{\frac{4}{N-2}} v\|_{L^{\rho'}((t_0,T),W^{1,\rho'})} \\
		&\lesssim \|v(t_0)\|_{H^1} + \|e^{-at} v\|^{\frac{4}{N-2}}_{L^\rho((t_0,T),L^n)} \|v\|_{L^\rho((t_0,T),W^{1,\rho})} \\
		&\lesssim \|v(t_0)\|_{H^1} + e^{-\frac{4a}{N-2} t_0} \|v\|^{\frac{N+2}{N-2}}_{L^\gamma((t_0,T),W^{1,\rho})}.
		\end{align*}
		By taking $a>0$ sufficiently large depending on $t_0$, the continuity argument implies that for any $T\in (t_0,T^*)$,
		\[
		\|v\|_{L^\gamma((t_0,T),W^{1,\rho})} \leq C\|v(t_0)\|_{H^1},
		\]
		where the constant $C>0$ is independent of $T$. Letting $T\rightarrow T^*$, we obtain
		\[
		\|v\|_{L^\gamma((t_0,T^*),\dot{W}^{1,\rho})} \leq C\|v(t_0)\|_{H^1}.
		\]
		Since $v \in L^\gamma_{\loc}([0,T^*),W^{1,\rho})$, we infer that there exists $a^*=a^*(u_0)>0$ such that for all $a>a^*$,
		\[
		\|v\|_{L^\gamma([0,T^*),W^{1,\rho})} \leq C(u_0)
		\]
		which, by the blow-up alternative, implies $T^*=\infty$. Here we note that $t_0$ and $\|v(t_0)\|_{H^1}$ depend not only on $\|u_0\|_{H^1}$ but also on the profile of $u_0$. The above uniform bound also gives the scattering. In fact, let $0<t_1<t_2$. By Strichartz estimates,
		\begin{align*}
		\|e^{-it_2\Delta} v(t_2) - e^{-it_1 \Delta} v(t_1)\|_{H^1} &= \left\| \int_{t_1}^{t_2} e^{-is\Delta} e^{-\frac{4a}{N-2} s} |v(s)|^{\frac{4}{N-2}} v(s) ds \right\|_{H^1} \\
		&\lesssim \|e^{-\frac{4a}{N-2} t} |v|^{\frac{4}{N-2}} v\|_{L^{\gamma'}((t_1,t_2),W^{1,\rho'})} \\
		&\lesssim \|e^{-\frac{4}{N-2}t} v\|^{\frac{4}{N-2}}_{L^\gamma((t_1,t_2),L^n)} \|v\|_{L^\gamma((t_1,t_2),W^{1,\rho})} \\
		&\lesssim \|v\|^{\frac{N+2}{N-2}}_{L^\gamma((t_1,t_2),W^{1,\rho})} \rightarrow 0
		\end{align*}
		as $t_1,t_2 \rightarrow \infty$. This shows that $(e^{-it\Delta} v(t))_t$ is a Cauchy sequence in $H^1$. Thus, the limit
		\[
		u_+:= u_0 - i \mu \int_0^\infty e^{-is\Delta} e^{-\frac{4a}{N-2} s} |v(s)|^{\frac{4}{N-2}} v(s) ds
		\]
		exists in $\dot{H}^1$. Repeating the above arguments, we prove
		\[
		\lim_{t\rightarrow \infty} \|v(t)-e^{it\Delta} u_+\|_{H^1}=0.
		\]
		The proof is complete.
	\end{proof}

	\section{Finite time blow-up}
	\label{S3}
	\setcounter{equation}{0}
	In this section, we give the proof of blow-up criteria given in Theorems $\ref{theo-blow-mass-sigma}$--$\ref{theo-blow-ener-rad}$. To this end, we first derive some localized virial estimates related to \eqref{NLS}. 
	\subsection{Localized virial estimates}
	Given a real-valued function $\chi$, we define the virial action associated to \eqref{NLS} by
	\begin{align} \label{defi-viri-acti}
	V_\chi(t):= \int \chi(x) |v(t,x)|^2 dx.
	\end{align}
	
	\begin{lemma} [\cite{TVZ}] 
		Let $N\geq 1$, $a>0$, $0<\alpha \leq \alpha^*$ and $\mu=-1$. Let $\chi: \R^N\rightarrow \R$ be a sufficiently smooth and decaying function. Let $v$ be a $H^1$ solution to \eqref{NLS}. Then for any $t\in [0,T^*)$,
		\begin{align} \label{firs-deri-viri}
		\frac{d}{dt} V_\chi(t) = 2 \int \nabla \chi \cdot \ima \left( \nabla v(t) \overline{v}(t)\right) dx
		\end{align}
		and
		\begin{align} \label{seco-deri-viri}
		\begin{aligned}
		\frac{d^2}{dt^2}V_\chi(t) = - \int \Delta^2 \chi |v(t)|^2 dx &+ 4\sum_{j,k=1}^N \int \partial^2_{jk}\chi \rea \left( \partial_j v(t) \partial_k \overline{v}(t)\right) dx \\
		&-\frac{2\alpha}{\alpha+2} e^{-a\alpha t} \int \Delta \chi |v(t)|^{\alpha+2} dx.
		\end{aligned}
		\end{align}
	\end{lemma}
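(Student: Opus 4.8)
The plan is to derive both identities by differentiating $V_\chi$ under the integral sign, substituting the equation \eqref{NLS} (with $\mu=-1$), and integrating by parts, treating $v$ as though it were smooth and rapidly decaying; the reduction to general $H^1$ solutions is deferred to a regularization step at the end. First I would establish \eqref{firs-deri-viri}. Differentiating \eqref{defi-viri-acti} gives $\frac{d}{dt}V_\chi(t)=2\rea\int\chi\,\overline{v}\,\partial_t v\,dx$. Writing \eqref{NLS} as $\partial_t v=i\Delta v-i\mu e^{-a\alpha t}|v|^\alpha v$, the nonlinear contribution equals $-i\mu e^{-a\alpha t}\chi|v|^{\alpha+2}$, which is purely imaginary, so only $2\rea\int i\chi\,\overline{v}\,\Delta v\,dx=-2\ima\int\chi\,\overline{v}\,\Delta v\,dx$ survives. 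Integrating by parts once and discarding the real quantity $\int\chi|\nabla v|^2\,dx$ yields \eqref{firs-deri-viri}. Equivalently, one records the continuity identity $\partial_t|v|^2=-2\nabla\cdot\ima(\overline{v}\nabla v)$, which makes the structure of the second step transparent.

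For \eqref{seco-deri-viri} I would differentiate \eqref{firs-deri-viri} once more, so that $\frac{d^2}{dt^2}V_\chi(t)=2\int\nabla\chi\cdot\partial_t\big[\ima(\overline{v}\nabla v)\big]\,dx$, and compute the time derivative of the current componentwise. Substituting $\partial_t v=i\Delta v-iF$ and $\partial_t\overline{v}=-i\Delta\overline{v}+i\overline{F}$, with $F:=\mu e^{-a\alpha t}|v|^\alpha v$, and using $\ima(iz)=\rea(z)$, one finds $\partial_t[\ima(\overline{v}\,\partial_k v)]=\rea(\overline{v}\,\partial_k\Delta v-\Delta\overline{v}\,\partial_k v)+\rea(\overline{F}\,\partial_k v-\overline{v}\,\partial_k F)$, splitting the integrand into a linear piece (from $\Delta$) and a nonlinear piece (from $F$).

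The nonlinear piece collapses cleanly and is the only part sensitive to the damping factor. A direct computation gives $\rea(\overline{F}\,\partial_k v-\overline{v}\,\partial_k F)=-\mu e^{-a\alpha t}\,\partial_k(|v|^\alpha)\,|v|^2=-\frac{\mu\alpha}{\alpha+2}e^{-a\alpha t}\,\partial_k(|v|^{\alpha+2})$, and integrating by parts against $\partial_k\chi$ (summed over $k$) produces $\frac{2\mu\alpha}{\alpha+2}e^{-a\alpha t}\int\Delta\chi\,|v|^{\alpha+2}\,dx$, which is exactly the last term of \eqref{seco-deri-viri} after inserting $\mu=-1$. The linear piece $\rea(\overline{v}\,\partial_k\Delta v-\Delta\overline{v}\,\partial_k v)$ is the classical virial expression, independent of the nonlinearity; moving two derivatives off $v$ by integration by parts turns $2\int\nabla\chi\cdot(\cdot)$ into $-\int\Delta^2\chi\,|v|^2\,dx+4\sum_{j,k}\int\partial^2_{jk}\chi\,\rea(\partial_j v\,\partial_k\overline{v})\,dx$, which can be checked against the textbook case $\chi=|x|^2$.

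The main obstacle is rigor rather than algebra: the computations involve second and third derivatives of $v$, whereas solutions lie only in $H^1$. I would therefore first prove both identities for smooth, fast-decaying solutions (for instance with $u_0\in H^2$, or in the weighted class $\Sigma$), where every integration by parts is justified and the assumption that $\chi$ is sufficiently smooth with derivatives bounded up to order four guarantees convergence of all integrals, and then recover the general statement by approximating $u_0$ in $H^1$ by smoother data and passing to the limit using the local well-posedness and continuous dependence recorded above. Since this is a known identity, essentially \cite{TVZ}, I would present the approximation only briefly and concentrate on the nonlinearity-dependent term, whose exponential weight $e^{-a\alpha t}$ is precisely what drives the later blow-up arguments.
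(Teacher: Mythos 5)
The paper does not prove this lemma---it is quoted directly from \cite{TVZ} (see also \cite{Cazenave})---and your derivation is the standard one: differentiate under the integral, substitute the equation, split into the linear virial part and the nonlinear current, and justify everything by regularization. Your algebra checks out (including the factor $\frac{\alpha}{\alpha+2}$ from $\partial_k(|v|^\alpha)|v|^2=\frac{\alpha}{\alpha+2}\partial_k(|v|^{\alpha+2})$ and the sign after inserting $\mu=-1$), so the proposal is correct and consistent with the cited source.
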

	
	A direct consequence of the above result with $\chi(x)=|x|^2$ is the following virial identity.
	\begin{corollary} \label{coro-viri-iden}
		Let $N\geq 1$, $a>0$, $0<\alpha \leq \alpha^*$ and $\mu=-1$. Let $u_0 \in \Sigma$. Then the corresponding solution to \eqref{NLS} satisfies $v \in C([0,T^*), \Sigma)$ and 
		\[
		\frac{d^2}{dt^2} \|x v(t)\|^2_{L^2} = 8 \|\nabla v(t)\|^2_{L^2} - \frac{4N\alpha}{\alpha+2} e^{-a\alpha t} \|v(t)\|^{\alpha+2}_{L^{\alpha+2}}
		\]
		for all $t\in [0,T^*)$. 
	\end{corollary}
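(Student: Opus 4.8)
The plan is to specialize the localized virial identity \eqref{seco-deri-viri} of the preceding lemma to the weight $\chi(x)=|x|^2$, for which $\nabla\chi=2x$, $\partial^2_{jk}\chi=2\delta_{jk}$, $\Delta\chi=2N$ and $\Delta^2\chi=0$. Substituting these values formally into \eqref{seco-deri-viri} annihilates the biharmonic term, collapses the Hessian term to $4\sum_{j,k}\int 2\delta_{jk}\,\rea(\partial_j v\,\partial_k\overline{v})\,dx=8\|\nabla v\|^2_{L^2}$, and turns the nonlinear term into $-\tfrac{4N\alpha}{\alpha+2}e^{-a\alpha t}\|v\|^{\alpha+2}_{L^{\alpha+2}}$, which is exactly the asserted identity. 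Two points must be secured before this formal step is legitimate: first, that the flow of \eqref{NLS} propagates the weighted regularity, i.e. $v\in C([0,T^*),\Sigma)$ so that $\|xv(t)\|_{L^2}$ is finite and the left-hand side makes sense; and second, that \eqref{seco-deri-viri}, which was stated only for \emph{decaying} weights $\chi$, may be used with the growing weight $\chi(x)=|x|^2$.

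For the first point I would invoke the local well-posedness theory in $\Sigma$. Since the time-dependent coefficient $e^{-a\alpha t}$ in \eqref{NLS} is smooth and bounded on $[0,\infty)$, the contraction argument producing local $\Sigma$ solutions for the classical NLS (see \cite{Cazenave}) applies essentially verbatim; the structural ingredient is that the operator $x+2it\nabla$ commutes with $i\partial_t+\Delta$ and satisfies the same Leibniz and Gagliardo--Nirenberg estimates, so the $\Sigma$-norm is propagated and depends continuously on $t$. This yields $v\in C([0,T^*),\Sigma)$, and in particular $xv(t)\in L^2$ together with uniform control of $\|xv(t)\|_{L^2}$ on every compact subinterval of $[0,T^*)$.

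The second point is the main obstacle, and I would remove it by truncation. Choose $\varphi\in C_c^\infty(\R^N)$ with $\varphi\equiv 1$ on $\{|x|\le 1\}$ and $\supp\varphi\subset\{|x|\le 2\}$, set $\chi_R(x):=|x|^2\varphi(x/R)$, and apply \eqref{seco-deri-viri} to each admissible $\chi_R$. On $\{|x|\le R\}$ one has $\chi_R(x)=|x|^2$ exactly, so the contribution of this ball already reproduces the three target terms restricted to $\{|x|\le R\}$, which converge to their full-space values as $R\to\infty$; on the annulus $\{R\le|x|\le 2R\}$ the derivatives obey $|\Delta^2\chi_R|\lesssim R^{-2}$, $|\partial^2_{jk}\chi_R|\lesssim 1$ and $|\Delta\chi_R|\lesssim 1$ uniformly in $R$, whence the three error integrals are controlled by $R^{-2}\|v\|^2_{L^2}$, by $\int_{|x|\ge R}|\nabla v|^2\,dx$, and by $e^{-a\alpha t}\int_{|x|\ge R}|v|^{\alpha+2}\,dx$ respectively, each tending to $0$ since $v(t)\in H^1\subset L^{\alpha+2}$ (using $\alpha\le\alpha^*$). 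Passing to the limit $R\to\infty$ gives the identity; the one place requiring care is justifying that $\tfrac{d^2}{dt^2}$ may be interchanged with $\lim_{R\to\infty}$, for which I would note that the above error bounds are uniform in $t$ on compact time intervals thanks to $v\in C([0,T^*),\Sigma)$, so that $\tfrac{d^2}{dt^2}V_{\chi_R}$ converges uniformly on such intervals to the claimed right-hand side while $V_{\chi_R}(t)\to\|xv(t)\|^2_{L^2}$, and the interchange follows by integrating twice and passing the limit inside.
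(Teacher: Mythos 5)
Your proposal is correct and follows the same route as the paper, which simply presents the corollary as a direct consequence of the localized virial lemma applied with $\chi(x)=|x|^2$ (the derivative computations $\nabla\chi=2x$, $\partial^2_{jk}\chi=2\delta_{jk}$, $\Delta\chi=2N$, $\Delta^2\chi=0$ give exactly the stated identity). The additional care you take --- propagating the $\Sigma$-regularity via the operator $x+2it\nabla$ and justifying the unbounded weight by truncation and passage to the limit --- fills in standard details that the paper leaves implicit, and does not constitute a different method.
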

	
	To study the blow-up criteria for \eqref{NLS} with radially symmetric initial data, we need the following localized virial estimates.
	\begin{lemma} \label{lem-viri-est-super}
		Let $N\geq 2$, $a>0$, $0<\alpha \leq 4$ and $\mu=-1$. Let $\chi_R$ be as in \eqref{defi-chi-R-super}. Let $v: [0,T^*) \times \R^N \rightarrow \C$ be a radially symmetric $H^1$ solution to \eqref{NLS}. Then for any $\vareps>0$, any $R>0$ and any $t \in [0,T^*)$,
		\begin{align} \label{loca-viri-est-super}
		\begin{aligned}
		\frac{d^2}{dt^2} V_{\chi_R}(t) \leq 8 \|\nabla v(t)\|^2_{L^2} &-\frac{4N\alpha}{\alpha+2} e^{-a\alpha t} \|v(t)\|^{\alpha+2}_{L^{\alpha+2}} \\
		&+ \left\{
		\renewcommand*{\arraystretch}{1.3}
		\begin{array}{cl}
		O \left(R^{-2} + R^{-2(N-1)}\|\nabla v(t)\|^2_{L^2}\right) &\text{if } \alpha=4, \\
		O \left(R^{-2} + \vareps^{-\frac{\alpha}{4-\alpha}} R^{-\frac{2(N-1)\alpha}{4-\alpha}} + \vareps \|\nabla v(t)\|^2_{L^2} \right) &\text{if } \alpha<4.
		\end{array}
		\right.
		\end{aligned}
		\end{align}
		Here the implicit constant depends only on $N, \alpha$ and $\|u_0\|_{L^2}$. 
	\end{lemma}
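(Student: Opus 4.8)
The plan is to apply the general second-derivative formula \eqref{seco-deri-viri} with $\chi = \chi_R$ and to control each of the three resulting terms, comparing everything to the exact virial identity of Corollary \ref{coro-viri-iden} (the case $\chi = |x|^2$). Since $\chi_R(x) = |x|^2$ on $\{|x| \le R\}$ and $\chi_R$ is constant on $\{|x| \ge 2R\}$, all error terms localize either to the annulus $\{R \le |x| \le 2R\}$ or to the exterior region $\{|x| > R\}$, and the radial symmetry of $v$ together with the conservation of mass $\|v(t)\|_{L^2} = \|u_0\|_{L^2}$ will produce the stated $R$- and $\vareps$-dependence.

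First I would treat the Hessian term. For a radial function one has $\partial_{jk}^2\chi_R = (\delta_{jk} - x_jx_k/r^2)\chi_R'(r)/r + (x_jx_k/r^2)\chi_R''(r)$, and since $v$ is radial, $\partial_j v = (x_j/r)\partial_r v$; hence the cross terms cancel and $\sum_{j,k}\partial_{jk}^2\chi_R\,\rea(\partial_j v\,\partial_k\overline v) = \chi_R''(r)|\partial_r v|^2 = \theta''(r/R)|\nabla v|^2$. Using the hypothesis $\theta'' \le 2$ from \eqref{defi-theta-super}, the second term of \eqref{seco-deri-viri} is bounded above by $8\|\nabla v(t)\|_{L^2}^2$, which is precisely the leading gradient term; the discarded contribution $4\int(2 - \theta''(r/R))|\nabla v|^2\,dx \ge 0$ only improves the inequality. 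For the biharmonic term, $\Delta^2\chi_R$ is supported on $\{R \le r \le 2R\}$ and scales like $R^{-2}$ there (each radial derivative of $\chi_R(r) = R^2\theta(r/R)$ carries a factor $R^{-1}$), so $|\int\Delta^2\chi_R\,|v|^2\,dx| \lesssim R^{-2}\|v\|_{L^2}^2 = O(R^{-2})$ by mass conservation.

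The main work is the nonlinear term. Writing $\Delta\chi_R = \theta''(r/R) + \frac{N-1}{r}R\theta'(r/R)$, which equals $2N$ on $\{r \le R\}$ and is bounded everywhere, I would split
\begin{align*}
-\frac{2\alpha}{\alpha+2}e^{-a\alpha t}\int\Delta\chi_R|v|^{\alpha+2}\,dx = -\frac{4N\alpha}{\alpha+2}e^{-a\alpha t}\|v\|_{L^{\alpha+2}}^{\alpha+2} + \frac{2\alpha}{\alpha+2}e^{-a\alpha t}\int(2N - \Delta\chi_R)|v|^{\alpha+2}\,dx,
\end{align*}
so that the error carries a bounded weight supported on $\{r > R\}$ and is controlled by $\int_{r>R}|v|^{\alpha+2}\,dx$. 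The key estimate is the radial Sobolev (Strauss) inequality $\|v\|_{L^\infty(|x|>R)} \lesssim R^{-\frac{N-1}{2}}\|v\|_{L^2}^{1/2}\|\nabla v\|_{L^2}^{1/2}$, valid for radial $H^1$ functions when $N \ge 2$; combined with $\int_{r>R}|v|^{\alpha+2} \le \|v\|_{L^\infty(|x|>R)}^\alpha\|v\|_{L^2}^2$, the bound $e^{-a\alpha t} \le 1$, and mass conservation, this yields
\[
\Big|\int(2N - \Delta\chi_R)|v|^{\alpha+2}\,dx\Big| \lesssim R^{-\frac{(N-1)\alpha}{2}}\|\nabla v(t)\|_{L^2}^{\alpha/2},
\]
with implicit constant depending only on $N$, $\alpha$ and $\|u_0\|_{L^2}$.

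It remains to put the pieces in the stated form. When $\alpha = 4$ the exponent $\alpha/2 = 2$ is already quadratic and $R^{-(N-1)\alpha/2} = R^{-2(N-1)}$, giving directly the term $O(R^{-2(N-1)}\|\nabla v\|_{L^2}^2)$; together with the $O(R^{-2})$ biharmonic contribution this is the first alternative. When $\alpha < 4$ the power $\alpha/2 < 2$, so I would apply Young's inequality with conjugate exponents $\frac{4}{4-\alpha}$ and $\frac{4}{\alpha}$ to the product $\big(\vareps^{-\alpha/4}R^{-(N-1)\alpha/2}\big)\big(\vareps^{\alpha/4}\|\nabla v\|_{L^2}^{\alpha/2}\big)$, which produces $\vareps^{-\frac{\alpha}{4-\alpha}}R^{-\frac{2(N-1)\alpha}{4-\alpha}} + \vareps\|\nabla v\|_{L^2}^2$, matching the second alternative. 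The only genuinely delicate point is the radial Sobolev inequality with the sharp exterior $R$-dependence $R^{-(N-1)/2}$ (this is exactly where $N \ge 2$ enters), together with the bookkeeping of exponents in the Young step; everything else is a direct substitution into \eqref{seco-deri-viri} and an application of mass conservation.
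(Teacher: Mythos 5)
Your proof is correct and follows exactly the standard route that the paper itself invokes: the paper gives no proof here but defers to \cite[Lemma 3.4]{Dinh-blow}, whose argument is precisely your combination of the localized virial identity \eqref{seco-deri-viri}, the radial (Strauss) inequality on $\{|x|>R\}$, mass conservation, and Young's inequality with exponents $\frac{4}{4-\alpha}$, $\frac{4}{\alpha}$. All the exponent bookkeeping checks out, and your observation that the Young step is where $\alpha\leq 4$ enters matches the remark following the lemma.
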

	
	Since $t\mapsto e^{-a\alpha t}$ is bounded on $[0,T^*)$, the proof of this result follows by the same lines as in \cite[Lemma 3.4]{Dinh-blow}.
	
	\begin{remark}
		The restriction $\alpha \leq 4$ comes from the Young inequality (see \cite{Dinh-blow}). If we consider $\frac{4}{N} \leq \alpha \leq \alpha^*$, then this restriction only effects the validity of $\alpha$ in $2D$.
	\end{remark}
	
	We also need the following refined version of Lemma $\ref{lem-viri-est-super}$ in the mass-critical case.
	\begin{lemma} \label{lem-viri-est-mass}
		Let $N\geq 2$, $a>0$, $\alpha=\frac{4}{N}$ and $\mu=-1$. Let $\chi_R$ be as in \eqref{defi-chi-R-super}. Let $v:[0,T^*) \times \R^N \rightarrow \C$ be a radially symmetric $H^1$ solution to \eqref{NLS}. Then for any $\vareps>0$, any $R>0$ and any $t\in [0,T^*)$,
		\begin{align} \label{loca-viri-est-mass}
		\begin{aligned}
		\frac{d^2}{dt^2} V_{\chi_R}(t) \leq 16 H(v(t)) &- 4 \int \left(\chi_{1,R} - C \vareps \chi_{2,R}^{\frac{N}{2}}\right) |\nabla v(t)|^2 dx \\
		&+ O \left( R^{-2} +\vareps R^{-2} + \vareps^{-\frac{1}{N-1}} R^{-2}\right),
		\end{aligned}
		\end{align}
		for some constant $C>0$, where
		\begin{align} \label{defi-chi-12-R}
		\chi_{1,R}:=2-\chi''_R, \quad \chi_{2,R}=2N-\Delta \chi_R.
		\end{align}
		Here the implicit constant depends only on $N, \alpha$ and $\|u_0\|_{L^2}$.
	\end{lemma}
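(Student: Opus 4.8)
The plan is to begin from the general virial identity \eqref{seco-deri-viri} applied to the radial cutoff $\chi_R$ and exploit radial symmetry to collapse the Hessian term. For a radial profile one has $\partial_j v = \frac{x_j}{r}\partial_r v$ and $\partial^2_{jk}\chi_R = \chi''_R\frac{x_jx_k}{r^2} + \frac{\chi'_R}{r}\big(\delta_{jk}-\frac{x_jx_k}{r^2}\big)$, so the full contraction reduces, using $|\partial_r v|=|\nabla v|$, to
\[
4\sum_{j,k}\int \partial^2_{jk}\chi_R\,\rea\big(\partial_j v\,\partial_k\overline v\big)\,dx = 4\int \chi''_R|\nabla v|^2\,dx .
\]
Thus \eqref{seco-deri-viri} becomes $\frac{d^2}{dt^2}V_{\chi_R}(t) = -\int \Delta^2\chi_R|v|^2\,dx + 4\int\chi''_R|\nabla v|^2\,dx - \frac{2\alpha}{\alpha+2}e^{-a\alpha t}\int\Delta\chi_R|v|^{\alpha+2}\,dx$.

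Next I would extract the energy. Substituting $\chi''_R = 2-\chi_{1,R}$ and $\Delta\chi_R = 2N-\chi_{2,R}$ from \eqref{defi-chi-12-R}, the gradient term splits as $8\|\nabla v\|^2_{L^2} - 4\int\chi_{1,R}|\nabla v|^2$, and the nonlinear term yields $-\frac{4N\alpha}{\alpha+2}e^{-a\alpha t}\|v\|^{\alpha+2}_{L^{\alpha+2}} + \frac{2\alpha}{\alpha+2}e^{-a\alpha t}\int\chi_{2,R}|v|^{\alpha+2}$. The key algebraic input is $N\alpha=4$, whence $\frac{4N\alpha}{\alpha+2}=\frac{16}{\alpha+2}$ and the two bulk contributions recombine exactly into
\[
8\|\nabla v\|^2_{L^2} - \frac{16}{\alpha+2}e^{-a\alpha t}\|v\|^{\alpha+2}_{L^{\alpha+2}} = 16 H(v(t)).
\]
This leaves three remainders: the biharmonic term $-\int\Delta^2\chi_R|v|^2$, the gradient defect $-4\int\chi_{1,R}|\nabla v|^2$ (kept), and the nonlinear remainder $\frac{2\alpha}{\alpha+2}e^{-a\alpha t}\int\chi_{2,R}|v|^{\alpha+2}$. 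Since $\chi_R(x)=R^2\theta(|x|/R)$, scaling gives $\|\Delta^2\chi_R\|_{L^\infty}\lesssim R^{-2}$ with support in $\{R\le|x|\le 2R\}$, so conservation of mass bounds the biharmonic term by $O(R^{-2})$.

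The crux is the nonlinear remainder. Using $e^{-a\alpha t}\le 1$ and that $\chi_{2,R}\ge 0$ is supported in $\{|x|\ge R\}$, it suffices to control $\int\chi_{2,R}|v|^{2+4/N}\,dx$. Here I would establish the weighted radial Gagliardo–Nirenberg inequality
\[
\int \chi_{2,R}|v|^{2+\frac4N}\,dx \lesssim R^{-\frac{2(N-1)}{N}}\|v\|_{L^2}^{2+\frac2N}\Big(\int \chi_{2,R}^{N/2}|\nabla v|^2\,dx\Big)^{\!1/N} + \text{(weight-derivative error)},
\]
obtained by applying the radial Strauss bound $\|g\|^2_{L^\infty(|x|\ge R)}\lesssim R^{-(N-1)}\|g\|_{L^2}\|\nabla g\|_{L^2}$ to $g=\chi_{2,R}^{N/4}v$, together with the elementary split $\int\chi_{2,R}|v|^{2+4/N}\le \|\chi_{2,R}^{N/4}v\|_{L^\infty(|x|\ge R)}^{4/N}\|v\|_{L^2}^2$. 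Invoking conservation of mass $\|v\|_{L^2}=\|u_0\|_{L^2}$ and Young's inequality with conjugate exponents $(N,\tfrac{N}{N-1})$,
\[
R^{-\frac{2(N-1)}{N}}\Big(\int \chi_{2,R}^{N/2}|\nabla v|^2\Big)^{\!1/N} \le C\vareps\int \chi_{2,R}^{N/2}|\nabla v|^2 + C\vareps^{-\frac{1}{N-1}}R^{-2},
\]
which produces precisely the correction $C\vareps\int\chi_{2,R}^{N/2}|\nabla v|^2$ that merges with $-4\int\chi_{1,R}|\nabla v|^2$ into $-4\int(\chi_{1,R}-C\vareps\chi_{2,R}^{N/2})|\nabla v|^2$, plus the error $\vareps^{-1/(N-1)}R^{-2}$.

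The main obstacle will be controlling the terms generated when the gradient falls on $\chi_{2,R}^{N/4}$ in $\nabla g$, which contribute $\int\chi_{2,R}^{N/2-2}|\nabla\chi_{2,R}|^2|v|^2$; for small $N$ the exponent $N/2-2$ is negative, so integrability near the inner boundary $|x|=R$, where $\chi_{2,R}$ vanishes, is not automatic. The structural fact that resolves this is that, for the explicit $\theta$ of \eqref{defi-vartheta}, $\chi_{2,R}$ vanishes \emph{quadratically} at $r=R$ (a direct computation gives $\chi_{2,R}\sim 6(r/R-1)^2$ there), so $\chi_{2,R}^{N/2-2}|\nabla\chi_{2,R}|^2\sim R^{-2}(r/R-1)^{N-2}$, which is locally integrable for every $N\ge 2$; hence the weight-derivative (and cross) terms are bounded, after a further application of Young's inequality, by contributions of sizes $O(R^{-2})$ and $O(\vareps R^{-2})$. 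Verifying this quadratic vanishing and carrying out the attendant bookkeeping is the delicate part; collecting the $O(R^{-2})$ biharmonic bound, the $\vareps^{-1/(N-1)}R^{-2}$ from the main Young step, and the $\vareps R^{-2}$ weight-derivative correction yields precisely the stated estimate \eqref{loca-viri-est-mass}.
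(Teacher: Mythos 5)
Your proposal is correct and follows essentially the same route as the paper, which does not reproduce the argument but defers to \cite[Lemma 3.7]{Dinh-blow}: namely, the radial reduction of the Hessian term in \eqref{seco-deri-viri}, the recombination into $16H(v(t))$ using $N\alpha=4$, and the control of the remainder $\int\chi_{2,R}|v|^{2+4/N}\,dx$ via the radial Strauss inequality applied to $\chi_{2,R}^{N/4}v$ followed by Young's inequality with exponents $(N,\tfrac{N}{N-1})$, in the spirit of Ogawa--Tsutsumi \cite{OgTsu}. Your treatment of the weight-derivative term is sound; note only that the quadratic vanishing you verify for the explicit cutoff can be replaced by the general fact that $|\nabla \chi_{2,R}|^{2}\lesssim R^{-2}\chi_{2,R}$ for any nonnegative $C^{2}$ weight of this scaling form, so the step does not depend on the particular choice of $\theta$.
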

	
	We refer the reader to \cite[Lemma 3.7]{Dinh-blow} for the proof of this result.
	
	\subsection{Finite time blow-up} ~ 
	
	{\bf $\blacktriangleright$ Mass-critical case.} In this paragraph, we give the proofs of Theorem $\ref{theo-blow-mass-sigma}$ and Theorem $\ref{theo-blow-mass-rad}$. Thanks to \eqref{chan-vari}, it suffices to prove the following blow-up criteria for the focusing problem \eqref{NLS} in the mass-critical case.
	
	\begin{lemma} \label{lem-blow-mass-sigm}
		Let $N\geq 1$, $\alpha=\frac{4}{N}$ and $\mu=-1$. If $u_0 \in \Sigma=H^1 \cap L^2(|x|^2 dx)$ satisfies one of the following conditions:
			\begin{itemize}
				\item $E(u_0)<0$, 
				\item $E(u_0) =0$ and $V(u_0)<0$, 
				\item $E(u_0)>0$ and $V(u_0) + \sqrt{2E(u_0) I(u_0)} <0$,
			\end{itemize}
			where $I$ and $V$ are as in \eqref{defi-I-V}, then there exists $a_*=a_*(\|u_0\|_{H^1})>0$ such that for all $0<a<a_*$, the corresponding solution to \eqref{NLS} blows up in finite time, i.e. $T^*<\infty$. 
	\end{lemma}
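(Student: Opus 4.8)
The plan is to run a Glassey-type virial argument on the weighted quantity $f(t):=V_{|x|^2}(t)=\|xv(t)\|_{L^2}^2$, exploiting the special algebra of the mass-critical exponent. Taking $\chi(x)=|x|^2$ in \eqref{firs-deri-viri} records the initial data $f(0)=I(u_0)$ and $f'(0)=4V(u_0)$, with $I,V$ as in \eqref{defi-I-V}. Since $\alpha=\tfrac4N$ forces $N\alpha=4$, in Corollary \ref{coro-viri-iden} the coefficient $\tfrac{4N\alpha}{\alpha+2}$ equals $\tfrac{16}{\alpha+2}$, so the two terms collapse exactly into the energy:
\[
f''(t)=8\|\nabla v(t)\|_{L^2}^2-\tfrac{16}{\alpha+2}e^{-a\alpha t}\|v(t)\|_{L^{\alpha+2}}^{\alpha+2}=16\,H(v(t)).
\]
Feeding in the non-conservation law \eqref{ener-iden}, I would then write $f''(t)=16E(u_0)+16R(t)$, where $R(t):=\tfrac{a\alpha}{\alpha+2}\int_0^t e^{-a\alpha s}\|v(s)\|_{L^{\alpha+2}}^{\alpha+2}\,ds\ge 0$ is nondecreasing with $R(0)=0$. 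The damping thus enters only through the nonnegative, $a$-proportional term $R$.

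Next I would reduce the three hypotheses to a single statement about the undamped comparison parabola $p(t):=I(u_0)+4V(u_0)\,t+8E(u_0)\,t^2$, which is exactly what $f$ would be if $R\equiv0$. A direct computation shows that each listed condition is precisely the requirement that $p$ reach the value $0$ at some finite time $T_0=T_0(E(u_0),V(u_0),I(u_0))>0$: a downward parabola when $E(u_0)<0$; a strictly decreasing line when $E(u_0)=0$ and $V(u_0)<0$; and an upward parabola with positive smaller root exactly when $E(u_0)>0$ and $V(u_0)+\sqrt{2E(u_0)I(u_0)}<0$ (the condition $V(u_0)<-\sqrt{2E(u_0)I(u_0)}$ being equivalent to positivity of the discriminant together with $V(u_0)<0$). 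The target is to show that, for $a<a_*$ small, $R$ is negligible on $[0,T_0]$, so that $f(T_0)\le p(T_0)+o(1)<0$; this contradicts $f\ge 0$ and hence forces $T^*<\infty$. Equivalently, once $f\to 0$ the uncertainty inequality $\|\nabla v\|_{L^2}\ge \tfrac{N}{2}\|u_0\|_{L^2}^2\,f^{-1/2}$ makes $\|\nabla v\|_{L^2}\to\infty$, which is the blow-up alternative.

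The hard part, and the entire reason a smallness threshold $a_*$ appears, is controlling $R$, since the energy is no longer conserved but increasing ($H'(t)=a\alpha\,\tfrac{1}{\alpha+2}e^{-a\alpha t}\|v\|_{L^{\alpha+2}}^{\alpha+2}\ge 0$). This is genuinely delicate in the blow-up regime $\|u_0\|_{L^2}\ge\|Q\|_{L^2}$, where the sharp Gagliardo--Nirenberg inequality gives no upper bound on $\|v\|_{L^{\alpha+2}}^{\alpha+2}$ by the energy, so the required smallness of $R$ must come entirely from the prefactor $a$ after a time-localization. My route would be by contradiction from $T^*=\infty$. First I would rule out that $H(v(t))$ stays negative for all $t$: then $f''<0$ makes $f$ concave, hence at most linear, while $H<0$ yields $\tfrac{a\alpha}{2}\int_0^\infty\|\nabla v(s)\|_{L^2}^2\,ds\le |E(u_0)|<\infty$; combined with the uncertainty lower bound this forces $\int^\infty f(s)^{-1}\,ds<\infty$, impossible for an at most linearly growing $f$. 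So $H$ must cross zero at some finite $t_*$, and the role of small $a$ is to push $t_*$ beyond $T_0$ so that the concave collapse of $f$ on $[0,t_*)$ occurs first; quantitatively this rests on a uniform-in-$a$ bound for $\sup_{[0,T_0]}\|v\|_{H^1}$, coming from local well-posedness and the convergence of $v$ to the undamped solution as $a\to0$, which gives $R=O(a)$ on the relevant horizon. Reconciling this finite horizon with the uniform bound is the step I expect to be the main obstacle; the reductions above are routine once it is secured. The change of variables \eqref{chan-vari} then transfers the finite-time blow-up of $v$ back to $u$.
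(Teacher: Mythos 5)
Your main line is exactly the paper's: the virial identity $\frac{d^2}{dt^2}\|xv(t)\|^2_{L^2}=16H(v(t))$ from Corollary \ref{coro-viri-iden} with $N\alpha=4$, the comparison parabola $f(t)=I(u_0)+4V(u_0)t+8E(u_0)t^2$ whose strict negativity at some finite $t_0$ is equivalent to the three listed hypotheses, and the reduction to showing that the damping correction (your $R$, the paper's $A(t)$ after two more integrations) is $O(a)$ up to time $t_0$, so that $\|xv(t_0)\|^2_{L^2}<0$ gives the contradiction. All of your computations along this line (the coefficients, the discriminant condition $V(u_0)+\sqrt{2E(u_0)I(u_0)}<0$, the uncertainty-principle remark) are correct.

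Where you diverge is in how you propose to close the key estimate, and there your plan has a genuine flaw. The paper does not need the detour through ``$H(v(t))$ must cross zero at some $t_*$'': it simply invokes the contradiction hypothesis $T^*=\infty$, so that $v\in C([0,\infty),H^1)$ and hence $\sup_{t\in[0,t_0]}\|v(t)\|_{H^1}\le C(t_0)$ by continuity on a compact interval together with the Sobolev embedding $H^1\subset L^{\alpha+2}$; the triple integral $A(t_0)$ then carries an explicit prefactor $a$ and tends to $0$ as $a\to0$. More importantly, the mechanism you name for the uniform-in-$a$ $H^1$ bound --- ``convergence of $v$ to the undamped solution as $a\to0$'' --- cannot work as stated: under precisely these hypotheses the undamped ($a=0$) mass-critical solution blows up, by Glassey's argument, at a time no later than $t_0$, so there is no undamped solution on $[0,t_0]$ to converge to, and any comparison estimate would degenerate as one approaches its blow-up time. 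You are right to flag uniformity in $a$ of the bound $C(t_0)$ as the delicate point (the paper writes $C(t_0)$ without tracking a possible $a$-dependence of $\sup_{[0,t_0]}\|v\|_{H^1}$), but the fix has to come from the local well-posedness theory itself --- whose existence times and norm bounds in the energy-subcritical case $\alpha=\tfrac4N<\alpha^*$ depend only on $H^1$ data norms and are uniform in $a$ because $e^{-a\alpha t}\le1$ --- and not from a comparison with the undamped flow.
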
 
		
	\begin{lemma} \label{lem-blow-mass-rad}
		Let $N\geq 2$, $\alpha=\frac{4}{N}$ and $\mu=-1$. If $u_0 \in H^1$ is radially symmetric and satisfies one of the following conditions:
			\begin{itemize}
				\item $E(u_0)<0$, 
				\item $E(u_0) =0$ and $W(u_0)<0$, 
				\item $E(u_0)>0$ and $W(u_0) + \sqrt{8E(u_0) J(u_0)} <0$,
			\end{itemize}
			where $J$ and $W$ are as in \eqref{defi-J-W}, then there exists $a_*=a_*(\|u_0\|_{H^1})>0$ such that for all $0<a<a_*$, the corresponding solution to \eqref{NLS} blows up in finite time. 
	\end{lemma}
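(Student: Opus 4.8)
The plan is to run a localized virial/convexity argument directly on \eqref{NLS}, exploiting that the weight is nonnegative. Set $z(t):=V_{\chi_R}(t)=\int \chi_R|v(t)|^2\,dx$ with $\chi_R$ as in \eqref{defi-chi-R-super}. Since $\theta\ge 0$ forces $\chi_R\ge 0$, we have $z(t)\ge 0$ on $[0,T^*)$, while \eqref{firs-deri-viri} together with \eqref{defi-J-W} lets us read off the Cauchy data $z(0)=J(u_0)$ and $z'(0)=2W(u_0)$. The goal is to show that under the stated hypotheses $z$ is driven below zero in finite time, unless $T^*<\infty$ already; either way $T^*<\infty$.

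First I would feed Lemma \ref{lem-viri-est-mass} into this. With $\chi_{1,R},\chi_{2,R}$ as in \eqref{defi-chi-12-R}, both quantities are supported in $\{r\ge R\}$ and vanish at the transition radius $r=R$; since $\tfrac N2\ge 1$, the power $\chi_{2,R}^{N/2}$ vanishes there at least as fast as $\chi_{1,R}$, and away from that radius $\chi_{1,R}$ is bounded below. Hence one can fix $\vareps>0$ small (depending only on $N$) so that the bracket $\chi_{1,R}-C\vareps\chi_{2,R}^{N/2}$ in \eqref{loca-viri-est-mass} is pointwise nonnegative; the corresponding term is then $\le 0$ and may be discarded, giving
\[
z''(t)\le 16\,H(v(t))+C_1R^{-2},\qquad C_1=C_1(N,\alpha,\|u_0\|_{L^2}),
\]
the radial localized counterpart of the exact identity $\tfrac{d^2}{dt^2}\|xv\|_{L^2}^2=16\,H(v(t))$ underlying the $\Sigma$ case (Lemma \ref{lem-blow-mass-sigm}). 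Inserting the energy identity \eqref{ener-iden} then yields
\[
z''(t)\le 16E(u_0)+C_1R^{-2}+\frac{16a\alpha}{\alpha+2}\int_0^t e^{-a\alpha s}\|v(s)\|_{L^{\alpha+2}}^{\alpha+2}\,ds.
\]

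The heart of the matter is to show the last term is negligible for small $a$ on the (fixed, $a$-independent) interval where the limiting parabola turns negative. The plan is to fix $R$ large so that $C_1R^{-2}$ is a harmless perturbation of $16E(u_0)$, and then exploit the explicit prefactor $a$: by Sobolev embedding $H^1\subset L^{\alpha+2}$ and mass conservation the integrand is controlled by $\|v(s)\|_{H^1}^{\alpha+2}$, so on any $[0,T]$ on which $\sup_{[0,T]}\|v(t)\|_{H^1}$ is bounded the whole correction is $O\bigl(aT\sup_{[0,T]}\|v\|_{H^1}^{\alpha+2}\bigr)\to 0$ as $a\to 0$. Granting this, a double integration gives
\[
z(t)\le J(u_0)+2W(u_0)\,t+\bigl(8E(u_0)+o(1)\bigr)t^2\qquad(a\to 0),
\]
and the three hypotheses are exactly the conditions driving this below zero: if $E(u_0)<0$ the leading coefficient is negative; if $E(u_0)=0,\ W(u_0)<0$ a small positive coefficient still lets the linear term push the parabola below $0$ for $R$ large and $a$ small; and if $E(u_0)>0$ the discriminant $(2W(u_0))^2-32E(u_0)J(u_0)=4\bigl(W(u_0)^2-8E(u_0)J(u_0)\bigr)$ is positive precisely when $W(u_0)+\sqrt{8E(u_0)J(u_0)}<0$. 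Since $z\ge 0$, each case is a contradiction, forcing $T^*<\infty$.

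The main obstacle is exactly the step I glossed above: the energy $H(v(t))$ is not conserved but increasing, so one must control $\sup_{[0,T]}\|v(t)\|_{H^1}$ uniformly in $a$ on the relevant interval — an interval that itself lengthens as $E(u_0)\to 0^-$ or $a\to 0$. I would close this by a continuity (bootstrap) argument: assume an $H^1$ bound on a maximal subinterval, use the $a$-smallness of the correction to show that $H(v(t))$, and hence $\|v(t)\|_{H^1}$, cannot deviate far from its reference value there, thereby reopening the bound on a fixed interval; if instead the solution ceases to exist before $T$, blow-up has already occurred. Getting the order of the parameters right (first $R$, then $T$, then $a$) is the delicate point, and it is here that the dependence $a_*=a_*(\|u_0\|_{H^1})$ enters.
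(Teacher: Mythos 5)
Your proposal follows essentially the same route as the paper: argue by contradiction, feed Lemma \ref{lem-viri-est-mass} and \eqref{deri-ener} into $V_{\chi_R}$, discard the gradient term after checking $\chi_{1,R}-C\vareps\chi_{2,R}^{N/2}\geq 0$ for small $\vareps$, and show the $a$-weighted correction to the quadratic upper bound vanishes as $a\to 0$ on the fixed, $a$-independent interval where the limiting parabola turns negative, contradicting $V_{\chi_R}\geq 0$. The only places the paper is more explicit are the pointwise nonnegativity of the bracket, which it verifies region by region using the specific cutoff \eqref{defi-vartheta}--\eqref{defi-chi-R} rather than a general $\theta$, and the uniform-in-$a$ bound $\sup_{[0,t_0]}\|v(t)\|_{H^1}\leq C(t_0)$, which it simply asserts from the ($a$-uniform) local theory under the global-existence assumption rather than running your bootstrap.
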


	\noindent {\bf Proof of Lemma $\ref{lem-blow-mass-sigm}$.} 
	Let $u_0 \in \Sigma$ satisfy one of the conditions given in Lemma $\ref{lem-blow-mass-sigm}$. Assume by contradiction that the corresponding solution to \eqref{NLS} exists globally in time, i.e. $T^*=\infty$. By Corollary $\ref{coro-viri-iden}$ with $\alpha=\frac{4}{N}$ and \eqref{deri-ener}, we see that 
	\[
	\frac{d^2}{dt^2} \|xv(t)\|^2_{L^2} = 16 H(v(t)) = 16 E(u_0) + \frac{32a}{N+2} \int_0^t e^{-\frac{4a}{N}s} \|v(s)\|^{\frac{4}{N}+2}_{L^{\frac{4}{N}+2}} ds
	\]
	for all $t\in [0,\infty)$. It follows that
	\[
	\|xv(t)\|^2_{L^2} = I(u_0) + 4 V(u_0) t+ 8E(u_0) t^2 + \frac{32a}{N+2} \int_0^t \int_0^s \int_0^\tau e^{-\frac{4a}{N}\sigma} \|v(\sigma)\|^{\frac{4}{N}+2}_{L^{\frac{4}{N}+2}} d\sigma d\tau ds=: f(t) + A(t)
	\]
	for all $t\in [0,\infty)$,	where
	\begin{align} \label{defi-f-mass} 
	f(t):= I(u_0) + 4 V(u_0) t+ 8E(u_0) t^2
	\end{align}
	and
	\begin{align} \label{defi-A-mass}
	A(t):=\frac{32a}{N+2} \int_0^t \int_0^s \int_0^\tau e^{-\frac{4a}{N}\sigma} \|v(\sigma)\|^{\frac{4}{N}+2}_{L^{\frac{4}{N}+2}} d\sigma d\tau ds
	\end{align}
	Under the assumptions of Lemma $\ref{lem-blow-mass-sigm}$, there exists $t_0>0$ such that $f(t_0)<0$. Since $v\in C([0,\infty), H^1)$, the Sobolev embedding implies
	\[
	\sup_{t\in [0,t_0]} \|v(t)\|_{L^{\frac{4}{N}+2}} \leq \sup_{t\in [0,t_0]} \|v(t)\|_{H^1} \leq C(t_0).
	\]
	We infer that
	\[
	A(t_0) \leq C(N,t_0) a \left|\int_0^{t_0} \int_0^s \int_0^\tau e^{-\frac{4a}{N} \sigma} d\sigma d\tau ds \right| = C(N, t_0) a \left( 1- \frac{4a}{N} t_0 - \frac{16a^2}{N^2} t_0^2 - e^{-\frac{4a}{N}t_0}\right) \rightarrow 0
	\]
	as $a \rightarrow 0$. There thus exists $a_*=a_*(t_0)>0$ such that for all $0<a<a_*$,
	\[
	A(t_0) \leq -\frac{f(t_0)}{2}.
	\]
	Since $t_0$ depends only on $\|u_0\|_{H^1}$, we have proved that there exists $a_*=a_*(\|u_0\|_{H^1})>0$ such that for all $0<a<a_*$,
	\[
	\|xv(t_0)\|^2_{L^2} \leq \frac{f(t_0)}{2} <0
	\]
	which is a contradiction. The proof is complete.
	\hfill $\Box$
	
	\noindent {\bf Proof of Lemma $\ref{lem-blow-mass-rad}$.}
	Let $u_0 \in H^1$ be radially symmetric and satisfy one of the conditions given in Lemma $\ref{lem-blow-mass-rad}$. Assume by contradiction that the corresponding solution to \eqref{NLS} exists globally in time, i.e. $T^*=\infty$. It is well-known that the corresponding solution to \eqref{NLS} is radially symmetric. By Lemma $\ref{lem-viri-est-mass}$ and \eqref{deri-ener}, we see that for any $\vareps>0$, any $R>0$ and any $t \in [0,\infty)$,
	\begin{align*}
	\frac{d^2}{dt^2} V_{\chi_R}(t) &\leq 16 H(v(t)) - 4 \int \left(\chi_{1,R} - C \vareps \chi_{2,R}^{\frac{N}{2}}\right) |\nabla v(t)|^2 dx  + O \left( R^{-2} +\vareps R^{-2} + \vareps^{-\frac{1}{N-1}} R^{-2}\right) \\
	&= 16 E(u_0) + \frac{32a}{N+2} \int_0^t e^{-\frac{4a}{N}s} \|v(s)\|^{\frac{4}{N}+2}_{L^{\frac{4}{N}+2}} ds  - 4 \int \left(\chi_{1,R} - C \vareps \chi_{2,R}^{\frac{N}{2}}\right) |\nabla v(t)|^2 dx \\
	&\mathrel{\phantom{= 16 E(u_0) + \frac{32a}{N+2} \int_0^t e^{-\frac{4a}{N}s} \|v(s)\|^{\frac{4}{N}+2}_{L^{\frac{4}{N}+2}} ds }} + O \left( R^{-2} +\vareps R^{-2} + \vareps^{-\frac{1}{N-1}} R^{-2}\right).
	\end{align*}
	Assume for the moment that 
	\begin{align} \label{posi-chi-12-R}
	\chi_{1,R} - C\vareps \chi_{2,R}^{\frac{N}{2}} \geq 0, \quad \forall r\geq 0
	\end{align}
	for a sufficiently small $\vareps>0$. We will consider separtely three cases: $E(u_0)<0$, $E(u_0)=0$ and $E(u_0)>0$.
	
	$\bullet$ If $E(u_0)<0$, then by choosing $R>0$ large enough depending on $\vareps$, we see that 
	\[
	\frac{d^2}{dt^2} V_{\chi_R}(t) \leq 12 E(u_0) + \frac{32a}{N+2} \int_0^t e^{-\frac{4a}{N}s} \|v(s)\|^{\frac{4}{N}+2}_{L^{\frac{4}{N}+2}} ds
	\]
	for all $t\in [0,\infty)$.  It follows that
	\[
	V_{\chi_R}(t) \leq J(u_0) + 2W(u_0) t + 6E(u_0) t^2 + \frac{32a}{N+2} \int_0^t \int_0^s \int_0^\tau e^{-\frac{4a}{N} \sigma} \|v(\sigma)\|^{\frac{4}{N}+2}_{L^{\frac{4}{N}+2}} d\sigma d\tau ds =:f_1(t) + A(t)
	\]
	for all $t\in [0,\infty)$, where 
	\[
	f_1(t):=J(u_0) + 2W(u_0) t + 6E(u_0) t^2 
	\]
	and $A(t)$ is as in \eqref{defi-A-mass}. Since $E(u_0)<0$, there exists $t_1>0$ such that $f_1(t_1)<0$. As in the proof of Lemma $\ref{lem-blow-mass-sigm}$, there exists $a_*=a_*(t_1)>0$ such that for all $0<a<a_*$,
	\[
	A(t_1) \leq -\frac{f_1(t_1)}{2}.
	\]
	Since $t_1$ depends only on $\|u_0\|_{H^1}$, we prove that there exists $a_*=a_*(\|u_0\|_{H^1})>0$ such that for all $0<a<a_*$,
	\[
	V_{\chi_R}(t_1) \leq \frac{f_1(t_1)}{2}<0
	\]
	which is a contradiction. 
	
	$\bullet$ If $E(u_0)=0$, then choosing $R>0$ large enough depending on $\vareps$, we see that 
	\[
	\frac{d^2}{dt^2} V_{\chi_R}(t) \leq 2\delta + \frac{32a}{N+2} \int_0^t e^{-\frac{4a}{N}s} \|v(s)\|^{\frac{4}{N}+2}_{L^{\frac{4}{N}+2}} ds
	\]
	for all $t\in [0,\infty)$, where $\delta>0$ will be chosen shortly. It follows that
	\[
	V_{\chi_R}(t) \leq J(u_0) + 2 W(u_0) t + \delta t^2 + \frac{32a}{N+2} \int_0^t \int_0^s \int_0^\tau e^{-\frac{4a}{N} \sigma} \|v(\sigma)\|^{\frac{4}{N}+2}_{L^{\frac{4}{N}+2}} d\sigma d\tau ds =:f_2(t) + A(t)
	\]
	for all $t\in [0,\infty)$, where
	\[
	f_2(t):= J(u_0) + 2 W(u_0) t + \delta t^2 
	\]
	and $A(t)$ is as in \eqref{defi-A-mass}. In order to $f_2$ takes negative values on $[0,\infty)$, we need 
	\[
	W(u_0)<0, \quad [W(u_0)]^2- \delta J(u_0)>0.
	\] 
	By the assumption $W(u_0)<0$, we can choose $\delta>0$ small so that $[W(u_0)]^2 - \delta J(u_0)>0$. This shows that under the assuptions $E(u_0)=0$ and $W(u_0)<0$, there exists $t_2>0$ such that $f_2(t_2)<0$. Moreover, as in the proof of Lemma $\ref{lem-blow-mass-sigm}$, there exists $a_*=a_*(t_2)>0$ such that for all $0<a<a_*$,
	\[
	A(t_2) \leq -\frac{f_2(t_2)}{2}.
	\]
	Since $t_2$ depends only on $\|u_0\|_{H^1}$, we prove that there exists $a_*=a_*(\|u_0\|_{H^1})>0$ such that for all $0<a<a_*$,
	\[
	V_{\chi_R}(t_2) \leq \frac{f_2(t_2)}{2}<0
	\]
	which is a contradiction. 
	
	$\bullet$ If $E(u_0)>0$, then by choosing $R>0$ large enough depending on $\vareps$, we get
	\[
	\frac{d^2}{dt^2}V_{\chi_R}(t) \leq 16(1+\delta) E(u_0) + \frac{32a}{N+2} \int_0^t e^{-\frac{4a}{N}s} \|v(s)\|^{\frac{4}{N}+2}_{L^{\frac{4}{N}+2}} ds
	\]
	for all $t\in [0,\infty)$, where $\delta>0$ will be chosen later. We infer that
	\[
	V_{\chi_R}(t) \leq J(u_0) + 2 W(u_0) t + 8(1+\delta) E(u_0)t^2 + \frac{32a}{N+2} \int_0^t \int_0^s \int_0^\tau e^{-\frac{4a}{N} \sigma} \|v(\sigma)\|^{\frac{4}{N}+2}_{L^{\frac{4}{N}+2}} d\sigma d\tau ds=: f_3(t) + A(t)
	\]
	for all $t\in [0,\infty)$, where
	\[
	f_3(t):= J(u_0) + 2 W(u_0) t + 8(1+\delta) E(u_0)t^2
	\]
	and $A(t)$ is as in \eqref{defi-A-mass}. To ensure $f_3$ takes negative values, we need 
	\[
	W(u_0)<0, \quad [W(u_0)]^2 -8(1+\delta) E(u_0)J(u_0)>0. 
	\]
	By taking $\delta>0$ sufficiently small, the above conditions are equivalent to 
	\[
	W(u_0)<0, \quad [W(u_0)]^2 - 8 E(u_0) J(u_0)>0
	\]
	hence $W(u_0) + \sqrt{8 E(u_0) J(u_0)} <0$. Therefore, under the assumptions of Lemma $\ref{lem-blow-mass-rad}$, there exists $t_3>0$ such that $f_3(t_3)<0$. On the other hand, as in the proof of Lemma $\ref{lem-blow-mass-sigm}$, there exists $a_*=a_*(t_3)>0$ such that for all $0<a<a_*$,
	\[
	A(t_3) \leq -\frac{f_3(t_3)}{2}.
	\]
	Since $t_3$ depends only on $\|u_0\|_{H^1}$, we prove that there exists $a_*=a_*(\|u_0\|_{H^1})>0$ such that for all $0<a<a_*$,
	\[
	V_{\chi_R}(t_3) \leq \frac{f_3(t_3)}{2}<0
	\]
	which is a contradiction. 
	
	It remains to show that \eqref{posi-chi-12-R} holds under the choice of $\chi_R$ as in \eqref{defi-vartheta}--\eqref{defi-chi-R}. We see that
	\[
	\chi'_R(r) = R \theta'(r/R) = R \vartheta (r/R), \quad \chi''_R(r) = \vartheta'(r/R).
	\]
	Using the fact $\Delta \chi_R(x) = \chi''_R(r) + \frac{N-1}{r}\chi'_R(r)$, we have
	\[
	\chi_{1,R}=2-\chi''_R, \quad \chi_{2,R} = 2- \chi''_R + (N-1) \left(2-\frac{\chi'_R}{r}\right).
	\]
	
	For $0\leq r \leq R$, \eqref{posi-chi-12-R} holds trivially since $\chi_{1,R} = \chi_{2,R} =0$ on $0\leq r \leq R$.
	
	For $R<r \leq (1+1/\sqrt{3}) R$, we have $\chi_{1,R} = 6(r/R -1)^2$ and 
	\begin{align*}
	\chi_{2,R} = 6(r/R-1)^2 + 2(N-1) \frac{(r/R-1)^3}{r/R} &= 6(r/R-1)^2 \left(1+\frac{(N-1)(r/R-1)}{3r/R}\right) \\
	&<6(r/R-1)^2 \left(1+\frac{N-1}{3\sqrt{3}}\right).
	\end{align*}
	By choosing $\vareps>0$ small enough, we see that \eqref{posi-chi-12-R} holds.
	
	For $r>(1+1/\sqrt{3})R$, the fact $\vartheta'(r/R)\leq 0$ implies that $\chi_{1,R} = 2- \chi''_R \geq 2$. On the other hand, $\chi_{2,R} \leq C$ for some constant $C>0$. It follows that \eqref{posi-chi-12-R} holds by choosing $\vareps>0$ small enough.
	
	Collecting the above cases, we prove \eqref{posi-chi-12-R}. The proof is complete.
	\hfill $\Box$

	{\bf $\blacktriangleright$ Mass-supercritical and energy-subcritical case.} In this paragraph, we give the proofs of Theorem $\ref{theo-blow-super-sigma}$ and Theorem $\ref{theo-blow-super-rad}$. By the change of variable \eqref{chan-vari}, the proofs are reduced to prove the following results.
	
	\begin{lemma} \label{lem-blow-super-sigma}
		Let $N\geq 1$, $\frac{4}{N}<\alpha<\alpha^*$ and $\mu=-1$. If $u_0 \in \Sigma$ satisfies one of the following conditions:
		\begin{itemize}
			\item $E(u_0)<0$,
			\item $E(u_0) =0$ and $V(u_0)<0$,
			\item $E(u_0)>0$ and $V(u_0) + \sqrt{2 E(u_0) I(u_0)}<0$,
		\end{itemize}
		where $I$ and $V$ are as in \eqref{defi-I-V}, then there exists $a_*=a_*(\|u_0\|_{H^1})>0$ such that for all $0<a<a_*$, the corresponding solution to \eqref{NLS} blows up in finite time.
	\end{lemma}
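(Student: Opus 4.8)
The plan is to run the contradiction argument of Lemma~\ref{lem-blow-mass-sigm} almost verbatim, the only new ingredient being a sign observation that replaces the exact mass-critical virial identity by an inequality. Assume $T^*=\infty$, so that the solution lies in $C([0,\infty),\Sigma)$ by Corollary~\ref{coro-viri-iden}. Writing out $16H(v(t))=8\|\nabla v(t)\|_{L^2}^2-\frac{16}{\alpha+2}e^{-a\alpha t}\|v(t)\|_{L^{\alpha+2}}^{\alpha+2}$ and subtracting it from the virial identity of Corollary~\ref{coro-viri-iden} gives
\[
\frac{d^2}{dt^2}\|xv(t)\|_{L^2}^2 = 16H(v(t)) - \frac{4(N\alpha-4)}{\alpha+2}e^{-a\alpha t}\|v(t)\|_{L^{\alpha+2}}^{\alpha+2}.
\]
The mass-supercriticality $\alpha>\frac4N$ is exactly $N\alpha-4>0$, so the last term is nonpositive and may be dropped to yield $\frac{d^2}{dt^2}\|xv(t)\|_{L^2}^2\le 16H(v(t))$. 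This is the structural heart of the argument: in the mass-critical case one had equality with $16H$, whereas here criticality is replaced by the favorable sign of $N\alpha-4$.

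Next I would feed the energy identity \eqref{ener-iden} into this inequality, obtaining $\frac{d^2}{dt^2}\|xv(t)\|_{L^2}^2\le 16E(u_0)+\frac{16a\alpha}{\alpha+2}\int_0^t e^{-a\alpha s}\|v(s)\|_{L^{\alpha+2}}^{\alpha+2}\,ds$, and integrate twice using $\|xv(0)\|_{L^2}^2=I(u_0)$ and $\frac{d}{dt}\big|_{t=0}\|xv(t)\|_{L^2}^2=4V(u_0)$ (from \eqref{firs-deri-viri} with $\chi(x)=|x|^2$). This produces $\|xv(t)\|_{L^2}^2\le f(t)+A(t)$, where $f(t):=I(u_0)+4V(u_0)t+8E(u_0)t^2$ and $A(t)$ is the analogue of \eqref{defi-A-mass} with exponent $\alpha+2$ in the nonlinearity. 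The three hypotheses are precisely the conditions forcing $f$ to dip below zero at some $t_0>0$: when $E(u_0)>0$ the inequality $V(u_0)+\sqrt{2E(u_0)I(u_0)}<0$ forces both $V(u_0)<0$ and the positive-discriminant condition $V(u_0)^2>2E(u_0)I(u_0)$, so the upward parabola $f$ has two positive roots; the cases $E(u_0)<0$ and $E(u_0)=0,\ V(u_0)<0$ are immediate. In every case $t_0$ is determined by $E(u_0),V(u_0),I(u_0)$.

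The only genuinely non-routine point is that, because the energy is no longer conserved, the remainder $A(t)$ cannot be discarded by conservation; instead it must be shown negligible on the fixed finite window $[0,t_0]$. I expect this to be the main (and mild) obstacle, and I would dispatch it exactly as in Lemma~\ref{lem-blow-mass-sigm}: the global-existence assumption gives $\sup_{t\in[0,t_0]}\|v(t)\|_{H^1}=:C(t_0)<\infty$, and the energy-subcriticality $\alpha<\alpha^*$ supplies the Sobolev embedding $H^1\hookrightarrow L^{\alpha+2}$, so $\|v(\sigma)\|_{L^{\alpha+2}}^{\alpha+2}$ is bounded on $[0,t_0]$. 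Hence $A(t_0)\le C(N,\alpha,t_0)\,a\int_0^{t_0}\!\int_0^s\!\int_0^\tau e^{-a\alpha\sigma}\,d\sigma\,d\tau\,ds\to 0$ as $a\to0$, thanks to the overall factor $a$. Choosing $a_*>0$ so that $A(t_0)\le-\tfrac12 f(t_0)$ for $0<a<a_*$ gives $\|xv(t_0)\|_{L^2}^2\le\tfrac12 f(t_0)<0$, the desired contradiction. Everything downstream of the inequality $\frac{d^2}{dt^2}\|xv\|_{L^2}^2\le16H$ is identical to the mass-critical proof, which is why this yields the ``simple proof'' of the Ohta--Todorova criterion promised in the remark.
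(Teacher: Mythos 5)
Your proposal is correct and follows essentially the same route as the paper: the same reduction $\frac{d^2}{dt^2}\|xv(t)\|_{L^2}^2 = 16H(v(t)) - \frac{4(N\alpha-4)}{\alpha+2}e^{-a\alpha t}\|v(t)\|_{L^{\alpha+2}}^{\alpha+2} \leq 16H(v(t))$ using $N\alpha>4$, the same double integration against $f(t)=I(u_0)+4V(u_0)t+8E(u_0)t^2$, and the same smallness of the remainder term as $a\to 0$ via Sobolev embedding on the fixed window $[0,t_0]$.
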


	\begin{lemma} \label{lem-blow-super-rad}
		Let $\mu=-1$ and
		\[
		\left\{
		\begin{array}{ccc}
		\frac{4}{N}<\alpha<\frac{4}{N-2} &\text{if}& N\geq 3, \\
		2<\alpha \leq 4 &\text{if} & N=2.
		\end{array}
		\right.
		\] 
		If $u_0 \in H^1$ is radially symmetric and satisfies one of the following conditions:
		\begin{itemize}
			\item $E(u_0)<0$,
			\item $E(u_0) =0$ and $W(u_0)<0$,
			\item $E(u_0)>0$ and $W(u_0) + \sqrt{2N\alpha E(u_0) J(u_0)}<0$,
		\end{itemize}
		where $J$ and $W$ are as in \eqref{defi-J-W}, then there exists $a_*=a_*(\|u_0\|_{H^1})>0$ such that for all $0<a<a_*$, the corresponding solution to \eqref{NLS} blows up in finite time.
	\end{lemma}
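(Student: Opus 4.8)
The plan is to mimic the proof of Lemma~\ref{lem-blow-mass-rad}, replacing the refined mass-critical virial estimate by the supercritical one in Lemma~\ref{lem-viri-est-super}, and arguing by contradiction. Assuming $T^*=\infty$, the corresponding solution stays radially symmetric and $V_{\chi_R}(t)=\int \chi_R|v(t)|^2\,dx\geq 0$ for all $t\geq 0$, while $V_{\chi_R}(0)=J(u_0)$ and, by \eqref{firs-deri-viri}, $\frac{d}{dt}V_{\chi_R}(0)=2W(u_0)$.

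The first key step is to convert the right-hand side of \eqref{loca-viri-est-super} into a quantity governed by the energy $H$. Writing $e^{-a\alpha t}\|v(t)\|_{L^{\alpha+2}}^{\alpha+2}=(\alpha+2)\bigl(\tfrac12\|\nabla v(t)\|_{L^2}^2-H(v(t))\bigr)$ and substituting yields
\[
\frac{d^2}{dt^2}V_{\chi_R}(t)\leq (8-2N\alpha)\|\nabla v(t)\|_{L^2}^2+4N\alpha\, H(v(t))+(\text{error}).
\]
Since $\alpha>\tfrac4N$, the gradient coefficient $8-2N\alpha$ is strictly negative, and this is precisely where mass-supercriticality is used. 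First I would fix $\vareps>0$ small enough (depending only on $N,\alpha$) so that $8-2N\alpha+C\vareps\leq 0$; for the case $\alpha=4$, $N=2$ one uses instead the error term $O(R^{-2})\|\nabla v\|_{L^2}^2$ and takes $R$ large. Because its coefficient is then nonpositive, the nonnegative gradient term is dropped, and taking $R$ large renders the remaining localization errors $O\bigl(R^{-2}+\vareps^{-\frac{\alpha}{4-\alpha}}R^{-\frac{2(N-1)\alpha}{4-\alpha}}\bigr)$ arbitrarily small. Using \eqref{deri-ener} in the form \eqref{ener-iden} to write $H(v(t))=E(u_0)+\frac{a\alpha}{\alpha+2}\int_0^t e^{-a\alpha s}\|v(s)\|_{L^{\alpha+2}}^{\alpha+2}\,ds$, I arrive at
\[
\frac{d^2}{dt^2}V_{\chi_R}(t)\leq c_0+\frac{4N\alpha^2 a}{\alpha+2}\int_0^t e^{-a\alpha s}\|v(s)\|_{L^{\alpha+2}}^{\alpha+2}\,ds,
\]
where $c_0$ equals $4N\alpha E(u_0)$ up to an arbitrarily small correction.

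Integrating twice gives $V_{\chi_R}(t)\leq f(t)+A(t)$ with $f(t)=J(u_0)+2W(u_0)t+\tfrac{c_0}{2}t^2$ and $A(t)$ the triple time integral as in \eqref{defi-A-mass} (with prefactor $\frac{4N\alpha^2 a}{\alpha+2}$). Exactly as in Lemma~\ref{lem-blow-mass-rad}, I would treat the three hypotheses separately: if $E(u_0)<0$ the leading coefficient is negative, so $f$ is a downward parabola; if $E(u_0)=0$ and $W(u_0)<0$ one replaces $c_0$ by a small $2\delta$ and uses $W(u_0)<0$; if $E(u_0)>0$ one keeps $c_0=4N\alpha(1+\delta)E(u_0)$ and the discriminant condition $[W(u_0)]^2>2N\alpha(1+\delta)E(u_0)J(u_0)$, which for $\delta$ small is implied by $W(u_0)+\sqrt{2N\alpha E(u_0)J(u_0)}<0$. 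In each case one finds a finite $t_*$, depending only on $\|u_0\|_{H^1}$, with $f(t_*)<0$.

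Finally, since $T^*=\infty$ and $v\in C([0,T^*),H^1)$, one has $\sup_{t\in[0,t_*]}\|v(t)\|_{H^1}<\infty$, so by Sobolev embedding $\|v(\sigma)\|_{L^{\alpha+2}}^{\alpha+2}$ is bounded on $[0,t_*]$ and $A(t_*)\to 0$ as $a\to 0$. Thus there exists $a_*=a_*(\|u_0\|_{H^1})>0$ such that $A(t_*)\leq -\tfrac12 f(t_*)$ for all $0<a<a_*$, giving $V_{\chi_R}(t_*)\leq \tfrac12 f(t_*)<0$, which contradicts $V_{\chi_R}\geq 0$. The hard part will be the first step: organizing the order of the limits---fixing $\vareps$ from $N,\alpha$ to keep the gradient coefficient negative, then sending $R\to\infty$---so that the gradient term and all localization errors are controlled simultaneously, and checking that the constant $4N\alpha$ produced by the energy substitution yields exactly the factor $2N\alpha$ appearing in the hypothesis $W(u_0)+\sqrt{2N\alpha E(u_0)J(u_0)}<0$.
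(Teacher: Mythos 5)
Your proposal is correct and follows essentially the same route as the paper: rewrite the localized virial estimate of Lemma \ref{lem-viri-est-super} as $4N\alpha H(v(t))-2(N\alpha-4)\|\nabla v(t)\|^2_{L^2}$ plus errors, use $N\alpha>4$ (together with $\vareps$ small and then $R$ large, handling $\alpha=4$, $N=2$ via the $R^{-2}\|\nabla v\|^2_{L^2}$ term) to discard the gradient contribution, substitute \eqref{ener-iden}, integrate twice, and run the three-case quadratic argument with the small-$a$ smallness of the triple integral. The constants you track ($c_0/2=2N\alpha(1+\delta)E(u_0)$ and the discriminant condition) match the paper's $f_3$ exactly.
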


	\noindent {\bf Proof of Lemma $\ref{lem-blow-super-sigma}$.}
	Assume by contradiction that the corresponding solution to \eqref{NLS} exists globally in time. By Corollary $\ref{coro-viri-iden}$, \eqref{deri-ener} and the fact $N\alpha>4$, we have
	\begin{align*}
	\frac{d^2}{dt^2} \|x v(t)\|^2_{L^2} &= 8 \|\nabla v(t)\|^2_{L^2} - \frac{4N\alpha}{\alpha+2} e^{-a\alpha t} \|v(t)\|^{\alpha+2}_{L^{\alpha+2}} \\
	&= 16H(u(t)) - \frac{4(N\alpha-4)}{\alpha+2} e^{-a\alpha t} \|v(t)\|^{\alpha+2}_{L^{\alpha+2}} \\
	&\leq 16 H(v(t)) \\
	&= 16 E(u_0) + \frac{16a\alpha}{\alpha+2} \int_0^t e^{-a\alpha s} \|v(s)\|^{\alpha+2}_{L^{\alpha+2}} ds
	\end{align*}
	for all $t\in [0,\infty)$. It follows that
	\[
	\|xv(t)\|^2_{L^2} \leq I(u_0) + 4 V(u_0) t + 8 E(u_0) t^2 + \frac{16a \alpha}{\alpha+2} \int_0^t \int_0^s \int_0^\tau e^{-a \alpha \sigma} \|v(\sigma)\|^{\alpha+2}_{L^{\alpha+2}} d\sigma d\tau ds=: f(t) + B(t)
	\]
	for all $t\in [0,\infty)$, where $f(t)$ is as in \eqref{defi-f-mass} and 
	\[
	B(t):= \frac{16a \alpha}{\alpha+2} \int_0^t \int_0^s \int_0^\tau e^{-a \alpha \sigma} \|v(\sigma)\|^{\alpha+2}_{L^{\alpha+2}} d\sigma d\tau ds
	\]
	Under the assumptions of Lemma $\ref{lem-blow-super-sigma}$, there exists $t_0>0$ such that $f(t_0)<0$. Since $v\in C([0,\infty), H^1)$, the Sobolev embedding implies
	\[
	\sup_{t\in [0,t_0]} \|v(t)\|_{L^{\alpha+2}} \leq \sup_{t\in [0,t_0]} \|v(t)\|_{H^1} \leq C(t_0).
	\]
	We infer that
	\[
	B(t_0) \leq C(\alpha, t_0) a \left|\int_0^{t_0} \int_0^s \int_0^\tau e^{-a \alpha \sigma} d\sigma d\tau ds \right| = C(\alpha, t_0)a \left( 1- a\alpha t_0 - a^2 \alpha^2 t_0^2 - e^{-a\alpha t_0}\right) \rightarrow 0
	\]
	as $a \rightarrow 0$. There thus exists $a_*=a_*(t_0)>0$ such that for all $0<a<a_*$,
	\[
	B(t_0) \leq -\frac{f(t_0)}{2}.
	\]
	Since $t_0$ depends only on $\|u_0\|_{H^1}$, we have proved that there exists $a_*=a_*(\|u_0\|_{H^1})>0$ such that for all $0<a<a_*$,
	\[
	\|xv(t_0)\|^2_{L^2} \leq \frac{f(t_0)}{2} <0
	\]
	which is a contradiction. The proof is complete.
	\hfill $\Box$
	
	\noindent {\bf Proof of Lemma $\ref{lem-blow-super-rad}$.}
	Let $u_0 \in H^1$ be radially symmetric and satisfy one of the conditions stated in Lemma $\ref{lem-blow-super-rad}$. Assume by contradiction that the corresponding solution to \eqref{NLS} exists globally in time. By Lemma $\ref{lem-viri-est-super}$, we have for any $\vareps>0$, any $R>0$ and any $t\in [0,\infty)$,
	\begin{align*}
	\frac{d^2}{dt^2} V_{\chi_R}(t) &\leq 8 \|\nabla v(t)\|^2_{L^2} -\frac{4N\alpha}{\alpha+2} e^{-a\alpha t} \|v(t)\|^{\alpha+2}_{L^{\alpha+2}} \\
	&\mathrel{\phantom{\leq 8 \|\nabla v(t)\|^2_{L^2}}} + \left\{
	\renewcommand*{\arraystretch}{1.3}
	\begin{array}{cl}
	O \left(R^{-2} + R^{-2(N-1)}\|\nabla v(t)\|^2_{L^2}\right) &\text{if } \alpha=4 \\
	O \left(R^{-2} + \vareps^{-\frac{\alpha}{4-\alpha}} R^{-\frac{2(N-1)\alpha}{4-\alpha}} + \vareps \|\nabla v(t)\|^2_{L^2} \right) &\text{if } \alpha<4
	\end{array}
	\right. \\
	&= 4N\alpha H(v(t)) - 2(N\alpha-4) \|\nabla v(t)\|^2_{L^2} \\
	&\mathrel{\phantom{\leq 8 \|\nabla v(t)\|^2_{L^2}}}  + \left\{
	\renewcommand*{\arraystretch}{1.3}
	\begin{array}{cl}
	O \left(R^{-2} + R^{-2(N-1)}\|\nabla v(t)\|^2_{L^2}\right) &\text{if } \alpha=4,\\
	O \left(R^{-2} + \vareps^{-\frac{\alpha}{4-\alpha}} R^{-\frac{2(N-1)\alpha}{4-\alpha}} + \vareps \|\nabla v(t)\|^2_{L^2} \right) &\text{if } \alpha<4.
	\end{array}
	\right.
	\end{align*}
	By \eqref{deri-ener}, we have for any $\vareps>0$, any $R>0$ and any $t\in [0,\infty)$,
	\begin{align*}
	\frac{d^2}{dt^2} V_{\chi_R}(t) \leq 4N\alpha E(u_0) & + \frac{4Na\alpha^2}{\alpha+2} \int_0^t e^{-a\alpha s} \|v(s)\|^{\alpha+2}_{L^{\alpha+2}}ds - 2(N\alpha-4) \|\nabla v(t)\|^2_{L^2} \\
	&+ \left\{
	\renewcommand*{\arraystretch}{1.3}
	\begin{array}{cl}
	O \left(R^{-2} + R^{-2(N-1)}\|\nabla v(t)\|^2_{L^2}\right) &\text{if } \alpha=4, \\
	O \left(R^{-2} + \vareps^{-\frac{\alpha}{4-\alpha}} R^{-\frac{2(N-1)\alpha}{4-\alpha}} + \vareps \|\nabla v(t)\|^2_{L^2} \right) &\text{if } \alpha<4.
	\end{array}
	\right.
	\end{align*}
	We consider separately three cases: $E(u_0)<0$, $E(u_0)=0$ and $E(u_0)>0$.
	
	$\bullet$ If $E(u_0)<0$, then by choosing $R>0$ large enough in the case $\alpha=4$, and $\vareps>0$ small enough and then $R>0$ large enough depending on $\vareps$ in the case $\alpha<4$, we see that 
	\[
	\frac{d^2}{dt^2} V_{\chi_R} (t) \leq 2N\alpha E(u_0) + \frac{4Na\alpha^2}{\alpha+2} \int_0^t e^{-a\alpha s} \|v(s)\|^{\alpha+2}_{L^{\alpha+2}}ds
	\]
	for all $t\in [0,\infty)$. We infer that
	\[
	V_{\chi_R}(t) \leq J(u_0) + 2W(u_0) t + N\alpha E(u_0) t^2 + \frac{4Na\alpha^2}{\alpha+2} \int_0^t \int_0^s \int_0^\tau e^{-a\alpha \sigma} \|v(\sigma)\|^{\alpha+2}_{L^{\alpha+2}}d\sigma d\tau ds =:f_1(t) + C(t)
	\]
	for all $t\in [0,\infty)$, where
	\[
	f_1(t):= J(u_0) + 2W(u_0) t + N\alpha E(u_0) t^2
	\]
	and
	\begin{align} \label{defi-C}
	C(t):=\frac{4Na\alpha^2}{\alpha+2} \int_0^t \int_0^s \int_0^\tau e^{-a\alpha \sigma} \|v(\sigma)\|^{\alpha+2}_{L^{\alpha+2}}d\sigma d\tau ds.
	\end{align}
	Since $E(u_0)<0$, there exists $t_1>0$ such that $f_1(t_1)<0$. Moreover, by the same argument as in the proof of Lemma $\ref{lem-blow-super-sigma}$, there exists $a_*=a_*(t_1)>0$ such that for all $0<a<a_*$,
	\[
	C(t_1) \leq -\frac{f_1(t_1)}{2}.
	\]
	Since $t_1$ depends only on $\|u_0\|_{H^1}$, we prove that there exists $a_*=a_*(\|u_0\|_{H^1})>0$ such that for all $0<a<a_*$,
	\[
	V_{\chi_R}(t_1) \leq \frac{f_1(t_1)}{2}<0
	\]
	which is a contradiction. 
	
	$\bullet$ If $E(u_0)=0$, then by choosing $R>0$ large enough in the case $\alpha=4$, and $\vareps>0$ small enough and then $R>0$ large enough depending on $\vareps$ in the case $\alpha<4$, we have
	\[
	\frac{d^2}{dt^2} V_{\chi_R}(t) \leq 2\delta  + \frac{4Na\alpha^2}{\alpha+2} \int_0^t e^{-a\alpha s} \|v(s)\|^{\alpha+2}_{L^{\alpha+2}}ds
	\]
	for all $t\in [0,\infty)$, where $\delta>0$ will be chosen later. It follows that
	\[
	V_{\chi_R}(t) \leq J(u_0) +2W(u_0) t + \delta t^2 + \frac{4Na\alpha^2}{\alpha+2} \int_0^t \int_0^s \int_0^\tau e^{-a\alpha \sigma} \|v(\sigma)\|^{\alpha+2}_{L^{\alpha+2}}d\sigma d\tau ds =:f_2(t) + C(t)
	\]
	for all $t\in [0,\infty)$, where 
	\[
	f_2(t):= J(u_0) +2W(u_0) t + \delta t^2
	\]
	and $C(t)$ is as in \eqref{defi-C}. Since $W(u_0)<0$, there exists $\delta>0$ small enough so that $[W(u_0)]^2 - \delta J(u_0)>0$. This shows that there exists $t_2>0$ such that $f_2(t_2)<0$. Moreover, as in the proof of Lemma $\ref{lem-blow-super-sigma}$, there exists $a_*=a_*(t_2)>0$ such that for all $0<a<a_*$,
	\[
	C(t_2) \leq -\frac{f_2(t_2)}{2}.
	\]
	Since $t_2$ depends only on $\|u_0\|_{H^1}$, we prove that there exists $a_*=a_*(\|u_0\|_{H^1})>0$ such that for all $0<a<a_*$,
	\[
	V_{\chi_R}(t_2) \leq \frac{f_2(t_2)}{2}<0
	\]
	which is a contradiction. 
	
	$\bullet$ If $E(u_0)>0$, then by choosing $R>0$ large enough in the case $\alpha=4$, and $\vareps>0$ small enough and then $R>0$ large enough depending on $\vareps$ in the case $\alpha<4$, we get that 
	\[
	\frac{d^2}{dt^2} V_{\chi_R}(t) \leq 4N\alpha(1+\delta) E(u_0) + \frac{4Na\alpha^2}{\alpha+2} \int_0^t e^{-a\alpha s} \|v(s)\|^{\alpha+2}_{L^{\alpha+2}}ds
	\]
	for all $t\in [0,\infty)$, where $\delta>0$ to be chosen shortly. It folows that
	\[
	V_{\chi_R}(t) \leq J(u_0) +2W(u_0) t + 2N\alpha(1+\delta) E(u_0) t^2 + \frac{4Na\alpha^2}{\alpha+2} \int_0^t \int_0^s \int_0^\tau e^{-a\alpha \sigma} \|v(\sigma)\|^{\alpha+2}_{L^{\alpha+2}}d\sigma d\tau ds  =:f_3(t) + C(t)
	\]
	for all $t\in [0,\infty)$, where
	\[
	f_3(t):= J(u_0) +2W(u_0) t + 2N\alpha(1+\delta) E(u_0) t^2
	\] 
	and $C(t)$ is as in \eqref{defi-C}. By the assumption $W(u_0) + \sqrt{2N\alpha E(u_0) J(u_0)} <0$, we see that there exists $\delta>0$ small enough so that
	\[
	W(u_0)<0, \quad [W(u_0)]^2 - 2N\alpha (1+\delta) E(u_0) J(u_0)>0.
	\]
	This shows the existence of $t_3>0$ such that $f_3(t_3)<0$. On the other hand, by the same reasoning as in the proof of Lemma $\ref{lem-blow-super-sigma}$, there exists $a_*=a_*(t_3)>0$ such that for all $0<a<a_*$,
	\[
	C(t_3) \leq -\frac{f_3(t_3)}{2}.
	\]
	Since $t_3$ depends only on $\|u_0\|_{H^1}$, we prove that there exists $a_*=a_*(\|u_0\|_{H^1})>0$ such that for all $0<a<a_*$,
	\[
	V_{\chi_R}(t_3) \leq \frac{f_3(t_3)}{2}<0
	\]
	which is a contradiction. 
	
	Collecting the above case, we finish the proof.	
	\hfill $\Box$
	
	{\bf $\blacktriangleright$ Energy-critical case.}
	In this paragraph, we give the proofs of Theorem $\ref{theo-blow-ener-sigma}$ and Theorem $\ref{theo-blow-ener-rad}$. Thanks to \eqref{chan-vari}, it is enough to prove the following results.
	
	\begin{lemma} \label{lem-blow-ener-sigma}
		Let $N\geq 3$, $\alpha=\frac{4}{N-2}$ and $\mu=-1$. If $u_0 \in \Sigma$ satisfies one of the following conditions:
		\begin{itemize}
			\item $E(u_0)<0$,
			\item $E(u_0) =0$ and $V(u_0)<0$,
			\item $E(u_0)>0$ and $V(u_0) + \sqrt{2 E(u_0) I(u_0)}<0$,
		\end{itemize}
		where $I$ and $V$ are as in \eqref{defi-I-V}, then there exists $a_*=a_*(u_0)>0$ such that for all $0<a<a_*$, the corresponding solution to \eqref{NLS} blows up in finite time.
	\end{lemma}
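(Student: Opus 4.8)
The plan is to run the weighted-virial (Glassey-type) argument in essentially the same way as in the proof of Lemma \ref{lem-blow-super-sigma}, the only genuinely new point being that we now sit at the critical exponent $\alpha=\alpha^*=\frac{4}{N-2}$, so the two ingredients borrowed from the subcritical argument — the virial identity of Corollary \ref{coro-viri-iden} and the Sobolev control of the $L^{\alpha+2}$ norm — must be checked to survive at the endpoint. I would argue by contradiction, assuming the maximal solution is global, $T^*=\infty$. Since Corollary \ref{coro-viri-iden} is stated for all $0<\alpha\leq\alpha^*$, it applies verbatim: it gives $v\in C([0,\infty),\Sigma)$ together with
\[
\frac{d^2}{dt^2}\|xv(t)\|^2_{L^2}=8\|\nabla v(t)\|^2_{L^2}-\frac{4N\alpha}{\alpha+2}e^{-a\alpha t}\|v(t)\|^{\alpha+2}_{L^{\alpha+2}}.
\]

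The key algebraic step is to re-express the right-hand side through the energy $H(v(t))$. Using $8\|\nabla v\|_{L^2}^2=16H(v)+\frac{16}{\alpha+2}e^{-a\alpha t}\|v\|_{L^{\alpha+2}}^{\alpha+2}$ and collecting terms, one obtains
\[
\frac{d^2}{dt^2}\|xv(t)\|^2_{L^2}=16H(v(t))-\frac{4(N\alpha-4)}{\alpha+2}e^{-a\alpha t}\|v(t)\|^{\alpha+2}_{L^{\alpha+2}}.
\]
At the critical exponent a short computation gives $N\alpha-4=\frac{8}{N-2}>0$, so the last term is nonpositive and may be discarded, yielding $\frac{d^2}{dt^2}\|xv(t)\|^2_{L^2}\leq 16H(v(t))$. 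Inserting the energy identity \eqref{ener-iden} and integrating twice in time then produces exactly the estimate from Lemma \ref{lem-blow-super-sigma},
\[
\|xv(t)\|^2_{L^2}\leq f(t)+B(t),
\]
where $f$ is the quadratic polynomial \eqref{defi-f-mass} and $B(t)=\frac{16a\alpha}{\alpha+2}\int_0^t\int_0^s\int_0^\tau e^{-a\alpha\sigma}\|v(\sigma)\|^{\alpha+2}_{L^{\alpha+2}}\,d\sigma\,d\tau\,ds$.

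From here the three sign hypotheses on $E(u_0),V(u_0),I(u_0)$ furnish, as in the subcritical case, a time $t_0>0$ with $f(t_0)<0$, and it remains to render the correction $B(t_0)$ negligible for small damping. I would use the endpoint Sobolev embedding $H^1\subset L^{2^*}=L^{\alpha+2}$, which still holds when $\alpha=\frac{4}{N-2}$ since $\alpha+2=2^*$, together with the fact that $v\in C([0,\infty),H^1)$ bounds $\sup_{t\in[0,t_0]}\|v(t)\|_{L^{\alpha+2}}$ by a finite constant on the compact interval $[0,t_0]$. Then $B(t_0)\to 0$ as $a\to 0$ by the same explicit integration as in Lemma \ref{lem-blow-super-sigma}, so for $a$ small enough $B(t_0)\leq -f(t_0)/2$, whence $\|xv(t_0)\|^2_{L^2}\leq f(t_0)/2<0$, a contradiction.

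I do not anticipate a genuine obstacle, since the argument is structurally identical to the mass-supercritical one; the substance is entirely in confirming that the favorable sign $N\alpha-4>0$ and the Sobolev embedding both hold at the critical exponent. The one point requiring care is that, in contrast with the stated subcritical conclusions, the threshold must be allowed to depend on the full profile $u_0$ and not merely on $\|u_0\|_{H^1}$: the bound on $\sup_{[0,t_0]}\|v(t)\|_{L^{\alpha+2}}$ comes only from continuity of $t\mapsto\|v(t)\|_{H^1}$ on a compact interval, and in the energy-critical regime no conservation law supplies an a priori $H^1$ bound in terms of $\|u_0\|_{H^1}$ alone. This is exactly why the statement reads $a_*=a_*(u_0)$.
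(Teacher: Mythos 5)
Your proposal is correct and follows essentially the same route as the paper: the paper also invokes Corollary \ref{coro-viri-iden}, rewrites the virial identity as $16H(v(t))-\tfrac{16}{N}e^{-\frac{4a}{N-2}t}\|v(t)\|^{2N/(N-2)}_{L^{2N/(N-2)}}\leq 16H(v(t))$, and then repeats the argument of Lemma \ref{lem-blow-mass-sigm}, likewise noting that $a_*$ must depend on the full profile of $u_0$. Your remarks on the endpoint Sobolev embedding and the sign $N\alpha-4=\tfrac{8}{N-2}>0$ are exactly the points the paper leaves implicit.
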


	\begin{lemma} \label{lem-blow-ener-rad}
		Let $N\geq 3$, $\alpha=\frac{4}{N-2}$ and $\mu=-1$. If $u_0 \in H^1$ is radially symmetric and satisfies one of the following conditions:
		\begin{itemize}
			\item $E(u_0)<0$,
			\item $E(u_0) =0$ and $W(u_0)<0$,
			\item $E(u_0)>0$ and $W(u_0) + \sqrt{\frac{8N}{N-2} E(u_0) J(u_0)}<0$,
		\end{itemize}
		where $J$ and $W$ are as in \eqref{defi-J-W}, then there exists $a_*=a_*(u_0)>0$ such that for all $0<a<a_*$, the corresponding solution to \eqref{NLS} blows up in finite time.
	\end{lemma}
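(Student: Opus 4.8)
The plan is to follow the proof of Lemma~\ref{lem-blow-super-rad} essentially verbatim, the point being that the energy-critical exponent places us in the mass-\emph{supercritical} situation rather than the borderline mass-critical one. Indeed, for every $N\ge 3$ we have $\alpha=\frac{4}{N-2}\le 4$ (with equality precisely when $N=3$), so the radial localized virial estimate \eqref{loca-viri-est-super} of Lemma~\ref{lem-viri-est-super} applies, and moreover $N\alpha-4=\frac{8}{N-2}>0$. Arguing by contradiction, I assume $T^*=\infty$. Using $\mu=-1$ and the energy functional $H$, I first rewrite the leading part of \eqref{loca-viri-est-super} as
\[
8\|\nabla v(t)\|_{L^2}^2-\frac{4N\alpha}{\alpha+2}e^{-a\alpha t}\|v(t)\|_{L^{\alpha+2}}^{\alpha+2}=4N\alpha\,H(v(t))-2(N\alpha-4)\|\nabla v(t)\|_{L^2}^2,
\]
so that the error terms in \eqref{loca-viri-est-super} proportional to $\|\nabla v(t)\|_{L^2}^2$ — namely $R^{-2(N-1)}\|\nabla v\|_{L^2}^2$ when $N=3$, and $\vareps\|\nabla v\|_{L^2}^2$ when $N\ge 4$ — can be absorbed into the genuinely negative term $-2(N\alpha-4)\|\nabla v\|_{L^2}^2$ by fixing $\vareps>0$ small and then $R>0$ large. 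This leaves a differential inequality of the form $\frac{d^2}{dt^2}V_{\chi_R}(t)\le 4N\alpha\,H(v(t))+\mathcal{E}_R$, where $\mathcal{E}_R$ (with $\vareps$ now fixed) tends to $0$ as $R\to\infty$.

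Into this I substitute the energy identity obtained by integrating \eqref{deri-ener}, namely $H(v(t))=E(u_0)+\frac{a\alpha}{\alpha+2}\int_0^t e^{-a\alpha s}\|v(s)\|_{L^{\alpha+2}}^{\alpha+2}\,ds$, and integrate twice in time, recalling from \eqref{firs-deri-viri} and \eqref{defi-J-W} that $V_{\chi_R}(0)=J(u_0)$ and $\frac{d}{dt}V_{\chi_R}(0)=2W(u_0)$. Splitting into the three cases $E(u_0)<0$, $E(u_0)=0$, $E(u_0)>0$ exactly as in the proof of Lemma~\ref{lem-blow-super-rad} (introducing an auxiliary $\delta>0$ in the last two), I arrive at $V_{\chi_R}(t)\le f_i(t)+C(t)$, where $f_i$ is an explicit quadratic in $t$ and $C(t)\ge 0$ is the triple time integral of $e^{-a\alpha\sigma}\|v\|_{L^{\alpha+2}}^{\alpha+2}$. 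The hypotheses are exactly what make $f_i$ negative at some finite time $t_i$: in the case $E(u_0)>0$ the coefficient of $t^2$ is $2N\alpha(1+\delta)E(u_0)$, so for $\delta$ small the conditions $W(u_0)<0$ and $[W(u_0)]^2>2N\alpha(1+\delta)E(u_0)J(u_0)$ reduce to $W(u_0)+\sqrt{2N\alpha E(u_0)J(u_0)}<0$, which, since $2N\alpha=\frac{8N}{N-2}$, is precisely the stated threshold.

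It remains to show that $C(t_i)\to 0$ as $a\to 0$. Here the critical Sobolev embedding $H^1\subset L^{\alpha+2}=L^{2N/(N-2)}$ bounds $\sup_{t\in[0,t_i]}\|v(t)\|_{L^{\alpha+2}}$ by $\sup_{t\in[0,t_i]}\|v(t)\|_{H^1}$, which is finite on the fixed interval $[0,t_i]$ under the standing hypothesis $T^*=\infty$; pulling out the explicit prefactor $a$ and the bounded triple integral of $e^{-a\alpha\sigma}$ then gives $C(t_i)=O(a)$. Consequently, for $a$ small enough, $V_{\chi_R}(t_i)\le\tfrac12 f_i(t_i)<0$, contradicting $V_{\chi_R}\ge 0$ (because $\chi_R\ge 0$), which proves the claimed blow-up. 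The step I expect to be most delicate — and the reason $a_*$ is allowed to depend on the full profile of $u_0$ rather than only on $\|u_0\|_{H^1}$ — is precisely this control of $C(t_i)$: in the energy-critical focusing regime the critical Sobolev inequality is scaling invariant and yields no a priori bound on $\sup_{[0,t_i]}\|v(t)\|_{H^1}$ in terms of $\|u_0\|_{H^1}$ alone, so the threshold for $a$ must absorb this supremum, exactly the profile-dependence already encountered in Theorem~\ref{theo-scat-ener}.
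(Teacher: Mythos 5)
Your proposal is correct and follows essentially the same route as the paper, which likewise reduces Lemma~\ref{lem-blow-ener-rad} to the argument of Lemma~\ref{lem-blow-super-rad} via the identity $8\|\nabla v\|_{L^2}^2-\tfrac{4N\alpha}{\alpha+2}e^{-a\alpha t}\|v\|_{L^{\alpha+2}}^{\alpha+2}=\tfrac{16N}{N-2}H(v(t))-\tfrac{16}{N-2}\|\nabla v\|_{L^2}^2$ and the localized virial estimate of Lemma~\ref{lem-viri-est-super} with $\alpha=\tfrac{4}{N-2}\le 4$. Your added remarks on absorbing the gradient error terms and on the profile-dependence of $a_*$ (via $\sup_{[0,t_i]}\|v(t)\|_{H^1}$) correctly fill in the details the paper omits.
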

	
	\noindent {\bf Proof of Lemma $\ref{lem-blow-ener-sigma}$.}
	The proof is similar to the one of Lemma $\ref{lem-blow-mass-sigm}$. Assume by contradiction that the solution to \eqref{NLS} exists globally in time. By Corollary $\ref{coro-viri-iden}$ and \eqref{deri-ener}, we have
	\begin{align*}
	\frac{d^2}{dt^2} \|xv(t)\|^2_{L^2} &= 8 \|\nabla v(t)\|^2_{L^2} - 8 e^{-\frac{4a}{N-2} t} \|v(t)\|^{\frac{2N}{N-2}}_{L^{\frac{2N}{N-2}}} \\
	&= 16 H(v(t)) - \frac{16}{N} e^{-\frac{4a}{N-2} t} \|v(t)\|^{\frac{2N}{N-2}}_{L^{\frac{2N}{N-2}}} \\
	&\leq 16 H(v(t)) \\
	& = 16 E(u_0) + \frac{32a}{N} \int_0^t e^{-\frac{4a}{N-2}s} \|v(s)\|^{\frac{4}{N-2}+2}_{L^{\frac{4}{N-2}+2}} ds
	\end{align*}
	for all $t\in [0,\infty)$. The proof follows by the same argument as in the one of Lemma $\ref{lem-blow-mass-sigm}$. Note that the time of existence depends not only on $\|u_0\|_{H^1}$ but also on the profile of $u_0$. We omit the details.
	\hfill $\Box$
	
	\noindent {\bf Proof of Lemma $\ref{lem-blow-ener-rad}$.}
	The proof is similar to the one of Lemma $\ref{lem-blow-super-rad}$ by using 
	\begin{align*}
	\frac{d^2}{dt^2} V_{\chi_R}(t) &\leq 8 \|\nabla v(t)\|^2_{L^2} -8e^{-\frac{4a}{N-2} t} \|v(t)\|^{\frac{2N}{N-2}}_{L^{\frac{2N}{N-2}}} \\
	&\mathrel{\phantom{\leq 8 \|\nabla v(t)\|^2_{L^2}}} + \left\{
	\renewcommand*{\arraystretch}{1.3}
	\begin{array}{cl}
	O \left(R^{-2} + R^{-4}\|\nabla v(t)\|^2_{L^2}\right) &\text{if } N=3 \\
	O \left(R^{-2} + \vareps^{-\frac{1}{N-3}} R^{-\frac{2(N-1)}{N-3}} + \vareps \|\nabla v(t)\|^2_{L^2} \right) &\text{if } N\geq 4
	\end{array}
	\right.  \\
	&=\frac{16N}{N-2} H(v(t)) - \frac{16}{N-2} \|\nabla v(t)\|^2_{L^2} \\
	&\mathrel{\phantom{=\frac{16N}{N-2} H(v(t))}} + \left\{
	\renewcommand*{\arraystretch}{1.3}
	\begin{array}{cl}
	O \left(R^{-2} + R^{-4}\|\nabla v(t)\|^2_{L^2}\right) &\text{if } N=3 \\
	O \left(R^{-2} + \vareps^{-\frac{1}{N-3}} R^{-\frac{2(N-1)}{N-3}} + \vareps \|\nabla v(t)\|^2_{L^2} \right) &\text{if } N\geq 4
	\end{array}
	\right. \\
	&= \frac{16N}{N-2} E(u_0) + \frac{32a}{N-2} \int_0^t e^{-\frac{4a}{N-2}s} \|v(s)\|^{\frac{4}{N-2}+2}_{L^{\frac{4}{N-2}+2}} ds  - \frac{16}{N-2} \|\nabla v(t)\|^2_{L^2} \\
	&\mathrel{\phantom{= \frac{16N}{N-2} E(u_0)}} + \left\{
	\renewcommand*{\arraystretch}{1.3}
	\begin{array}{cl}
	O \left(R^{-2} + R^{-4}\|\nabla v(t)\|^2_{L^2}\right) &\text{if } N=3, \\
	O \left(R^{-2} + \vareps^{-\frac{1}{N-3}} R^{-\frac{2(N-1)}{N-3}} + \vareps \|\nabla v(t)\|^2_{L^2} \right) &\text{if } N\geq 4.
	\end{array}
	\right.
	\end{align*}
	We thus omit the details.
	\hfill $\Box$

	\section*{Acknowledgement}
	This work was supported in part by the Labex CEMPI (ANR-11-LABX-0007-01). The author would like to express his deep gratitude to his wife - Uyen Cong for her encouragement and support. He also would like to thank the reviewer for his/her helpful comments and suggestions. 
	

\end{document}